\newtheorem{Pro}{Proposition}[section]
\newtheorem{lemma}{Lemma}[section]
\newtheorem{Cor}{Corollary}[section]
\newtheorem{Mythm}{Theorem}
\theoremstyle{definition}
\newtheorem{Ex}{Example}[section]
\theoremstyle{definition}
\newtheorem{Rem}{Remark}[section]
\numberwithin{equation}{section}
\begin{document}
\title[Quasiconformal equivalence]{On the quasiconformal equivalence of
    Dynamical Cantor sets}
\author{
    Hiroshige Shiga    
}
\address{Department of Mathematics,
Kyoto Sangyo University \\
Motoyama, Kamigamo, Kita-ku Kyoto, Japan} 

\email{shiga@cc.kyoto-su.ac.jp}
\date{\today}    
\keywords{Cantor sets, Quasiconformal maps}
\subjclass[2010]{Primary 30C62, Secondary 30F25.}
\thanks{
The author was partially supported by the Ministry of Education, Science, Sports
and Culture, Japan;
Grant-in-Aid for Scientific Research (B), 16H03933, 2016--2020.}

\begin{abstract}
The complement of a Cantor set in the complex plane is itself regarded as a Riemann surface of infinite type.
The problem of this paper  is the quasiconformal equivalence of such Riemann surfaces.
Particularly, we are interested in Riemann surfaces given by Cantor sets which are created through dynamical methods.
We discuss the quasiconformal equivalence for the complements of Cantor Julia sets of rational functions and random Cantor sets.
We also consider the Teichm\"uller distance between random Cantor sets.

\end{abstract}
\maketitle
\section{Introduction}
Let $E$ be a Cantor set in the Riemann sphere $\widehat{\mathbb C}$, that is, a totally disconnected perfect set in $\widehat{\mathbb C}$.
The standard middle one-third Cantor set $\mathcal{C}$ is a typical example.
We consider the complement $X_{E}:=\widehat{\mathbb C}\setminus E$ of the Cantor set $E$. 
It is an open Riemann surface with uncountable boundary components.
We are interested in the quasiconformal equivalence of such Riemann surfaces.
In the previous paper \cite{ShigaPrepri}, we show that the complement of the limit set of a Schottky group is quasiconformally equivalent to $X_{\mathcal C}$, the complement of $\mathcal C$ (\cite{ShigaPrepri} Theorem 6.2).
In this paper, we discuss the quasiconformal equivalence for the complements of Cantor Julia sets of hyperbolic rational functions and random Cantor sets (see \S2 for the terminologies). 
We establish the following theorems.
\begin{Mythm}
\label{Thm:Main}
	Let $f$ be a rational function of degree $d>1$ and $\mathcal J$ be the Julia set of $f$.
	Suppose that $f$ is hyperbolic and $\mathcal J$ is a Cantor set.
	Then, the complement $X_{\mathcal J}$ of $\mathcal J$ is quasiconformally equivalent to $X_{\mathcal C}$.
\end{Mythm}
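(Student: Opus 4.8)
The plan is to show that the Cantor Julia set $\mathcal J$ has \emph{uniformly bounded geometry} and then to run the construction that, in \cite{ShigaPrepri}, produces the quasiconformal equivalence between the complement of a Schottky limit set and $X_{\mathcal C}$. What that construction really uses is the following data attached to a Cantor set $E\subset\widehat{\mathbb C}$: a nested sequence of finite coverings $\mathcal U_0\succ\mathcal U_1\succ\cdots$, each $\mathcal U_n$ a family of pairwise disjoint Jordan domains with $E\subset\bigcup\mathcal U_n$ and $\bigcap_n\bigcup\mathcal U_n=E$, each member of $\mathcal U_{n+1}$ compactly contained in a unique member of $\mathcal U_n$, each member of $\mathcal U_n$ containing between $1$ and a fixed number $K$ of members of $\mathcal U_{n+1}$, and, with constants independent of $n$: (i) the domains in $\mathcal U_n$ have uniformly bounded eccentricity; (ii) the diameter of a child is at most a fixed fraction of the diameter of its parent; (iii) a child, its siblings, and the boundary of the parent are mutually separated by gaps comparable to the child's diameter. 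I will first record that any Cantor set with such a structure has $X_E$ quasiconformally equivalent to $X_{\mathcal C}$: one produces a quasiconformal homeomorphism $\Phi$ of $\widehat{\mathbb C}$ with $\Phi(E)=\mathcal C$ as a locally uniform limit of quasiconformal homeomorphisms $\Phi_n$, where $\Phi_n$ identifies the $n$-th generation of the covering of $E$ with a suitably chosen generation of the dyadic covering of $\mathcal C$ — the two trees can be matched level by level because the branching of $E$ is bounded by $K$ — and interpolates quasiconformally across the complementary gaps; conditions (i)--(iii) force the maximal dilatations of the $\Phi_n$ to stay below a single constant, compactness of families of uniformly quasiconformal maps yields the limit $\Phi$, and (ii)--(iii) again show that $\Phi$ is injective and carries $E$ onto $\mathcal C$. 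Restricting $\Phi$ to the complements gives the claimed equivalence.

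So it suffices to equip $\mathcal J$ with such a structure, and this is where hyperbolicity enters. Since $f$ is hyperbolic its postcritical set lies in the Fatou set; hence $\mathcal J$ contains no critical point, and there are a neighborhood of $\mathcal J$ and a conformal metric $\rho$ on it with $\|f'\|_\rho\ge\lambda>1$ on $\mathcal J$. By this expanding property we may choose a neighborhood $W\supset\mathcal J$ that is a finite union of pairwise disjoint Jordan domains $D_1,\dots,D_N$ with $\overline{f^{-1}(W)}\subset W$ (a trapping neighborhood) and with $f|_{D_i}$ univalent for each $i$ (possible since a small enough neighborhood of $\mathcal J$ avoids the critical set and $f$ is locally injective there); then $\mathcal J=\bigcap_{n\ge0}f^{-n}(W)$. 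Let $\mathcal U_n$ be the set of connected components of $f^{-n}(W)$. Each such component is the image $g_w(D_i)$ of some $D_i$ under a univalent branch $g_w$ of $f^{-n}$ indexed by an admissible word $w$ of length $n$, so $\mathcal J$ appears as the attractor of a finite graph-directed conformal iterated function system, and $\{\mathcal U_n\}$ is a nested sequence of the required combinatorial type with $\bigcap_n\bigcup\mathcal U_n=\mathcal J$; the inclusion $\overline{f^{-1}(W)}\subset W$ gives the compact containment of children in parents. Uniform $\rho$-expansion yields (ii), and the Koebe distortion theorem, applied to the univalent branches (which are in fact univalent on neighborhoods of their closed domains, all contained in a fixed compact set), yields the bounded eccentricity in (i) and the comparability of $n$-th generation gaps with $n$-th generation diameters in (iii); these bounds are uniform in $n$ because the system is finite and the distortion estimates persist under composition.

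I expect the main obstacle to be the honest construction of this Markov / graph-directed structure with all three uniform estimates at once: one must arrange the pieces to be Jordan domains on which $f$ is genuinely univalent and whose preimages stay compactly nested (here the trapping neighborhood and the avoidance of the critical set are used, possibly after replacing $f$ by an iterate, which changes neither $\mathcal J$ nor hyperbolicity), keep the system finite so that the Koebe constants and the contraction factors are uniform over all branches and hence over all generations, and verify that the resulting combinatorial tree — a subshift of finite type with a node-dependent but bounded number of children — can be matched against the rigid dyadic tree of $\mathcal C$ by the interpolation of the first paragraph without spoiling the dilatation bound. Once this bookkeeping is carried out, the criterion of the first paragraph applies to $E=\mathcal J$, and the theorem follows.
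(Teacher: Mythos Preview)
Your approach is correct in outline and genuinely different from the paper's. The paper does not invoke a ``bounded geometry'' criterion for Cantor sets, nor Koebe distortion, nor a trapping neighbourhood of $\mathcal J$. Instead it works on the Fatou side: starting from a disk $\Omega_0$ around the attracting fixed point with $f(\overline{\Omega}_0)\subset\Omega_0$, it builds the exhaustion $\Omega_k=f^{-k}(\Omega_0)$ of $\mathcal F$ and proves a lemma that, for $k\ge k_0$, every component of the collar $\Omega_{k+1}\setminus\overline{\Omega}_k$ lies in a disk on which $f$ is injective; consequently $f^{k-k_0}$ maps each such component \emph{conformally} onto one of the finitely many components $\omega_1,\dots,\omega_\ell$ of $\Omega_{k_0+1}\setminus\overline{\Omega}_{k_0}$. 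Thus only finitely many conformal types occur among all collars, and one can match them, level by level, against an explicit exhaustion of $X_{\mathcal C}$ built from its standard pants decomposition; a gluing proposition and Proposition~\ref{Pro:qc-equiv} (equivalence near the ideal boundary suffices in genus zero) finish the proof.

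The trade-offs are as follows. The paper's ``finitely many conformal types'' observation replaces all your metric estimates (eccentricity, shrinking, separation) by a single conformal bookkeeping fact, and the reduction via Proposition~\ref{Pro:qc-equiv} lets it ignore everything inside a fixed compact set; in exchange it only produces a quasiconformal map of the complements, not of $\widehat{\mathbb C}$. Your route packages $\mathcal J$ as the attractor of a graph-directed conformal IFS and appeals to Koebe distortion for uniform bounds; it yields the stronger conclusion $\Phi(\mathcal J)=\mathcal C$ for some global quasiconformal $\Phi$, and isolates a reusable criterion for general Cantor sets, at the cost of more analytic machinery (distortion, compactness of normalized qc families, explicit interpolation across gaps, and the tree-matching between a bounded-branching tree and the dyadic tree). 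Two places in your sketch that deserve care when writing it out: arranging the initial $W=\bigcup D_i$ so that $\overline{f^{-1}(W)}\subset W$ \emph{and} each $D_i$ is small enough for $f$ to be univalent (your parenthetical ``possibly after replacing $f$ by an iterate'' is exactly the right fix), and handling generations in which a parent has a single child when aligning with the dyadic tree of $\mathcal C$.
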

As for random Cantor sets, we obtain the followings.

\begin{Mythm}
	\label{Thm:Main2}
	Let $\omega=(q_n)_{n=1}^{\infty}$ and $\widetilde{\omega}=(\tilde q_n)_{n=1}^{\infty}$ be sequences with $\delta$-lower bound.
	We put
	\begin{equation}
		d(\omega, \widetilde{\omega})= \sup_{n\in \mathbb N} \max  \left\{\left | \log \frac{1-\tilde q_n}{1-q_n}\right |, |\tilde q_n -q_n|\right\}.
	\end{equation}
	\begin{enumerate}
		\item If $d(\omega, \widetilde{\omega})<\infty$, then there exists an $\exp(C(\delta)d(\omega, \widetilde{\omega}))$-quasiconformal mapping $\varphi$ on $\widehat{\mathbb{C}}$ such that $\varphi (E(\omega))=E(\widetilde{\omega})$, where $C(\delta)>0$ is a  constant depending only on $\delta$;
	\item if $\lim_{n\to \infty}\log \frac{1-\tilde q_n}{1-q_n}=0$, then $E(\widetilde\omega)$ is asymptotically conformal to $E(\omega)$, that is, there exists a quasiconformal mapping $\varphi$ on $\widehat{\mathbb C}$ with $\varphi (E(\omega))=E(\widetilde\omega)$ such that for any $\varepsilon>0$, $\varphi |_{U_{\varepsilon}}$ is $(1+\varepsilon)$-quasiconformal on a neighborhood $U_{\varepsilon}$ of $E(\omega)$. 
	\end{enumerate}
	
\end{Mythm}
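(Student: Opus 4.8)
\noindent\emph{Plan of the proof.}\quad
Recall from \S2 the construction $E(\omega)=\bigcap_{n\ge0}E_n(\omega)$, where $E_n(\omega)=\bigsqcup_{|I|=n}Q_I(\omega)$ is a finite disjoint union of closed Jordan domains indexed by words $I$ of length $n$: each $Q_I(\omega)$ is a similar copy of the initial domain, contains its $m$ successors $Q_{Ij}(\omega)$ as sub-copies whose linear sizes and positions inside $Q_I(\omega)$ are prescribed by $q_{n+1}$, and the $\delta$-lower bound forces the successors to lie well inside $Q_I(\omega)$, to be pairwise separated, and the \emph{frame} $F_I(\omega):=Q_I(\omega)\setminus\bigcup_j\operatorname{int}Q_{Ij}(\omega)$ to have non-degeneracy (say, the moduli of its ring subdomains) bounded below by a quantity depending only on $\delta$, uniformly in $I$ and $n$; likewise for $\widetilde\omega$. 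Moreover, since the construction ties the relative size of the successors to $1-q_{n+1}$ and their displacement to $q_{n+1}$, replacing $\omega$ by $\widetilde\omega$ at stage $n+1$ rescales the holes of the frame by the factor $\tfrac{1-\widetilde q_{n+1}}{1-q_{n+1}}$ and shifts them by an amount comparable to $|\widetilde q_{n+1}-q_{n+1}|$. The plan is to construct $\varphi$ as a uniform limit of quasiconformal self-maps $\varphi_n$ of $\widehat{\mathbb C}$, built by induction on $n$ so that $\varphi_n(Q_I(\omega))=Q_I(\widetilde\omega)$ for all $|I|=n$, $\varphi_n$ is affine (hence conformal) on each such $Q_I(\omega)$, and $\varphi_{n+1}=\varphi_n$ off $E_n(\omega)$. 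Passing from $\varphi_n$ to $\varphi_{n+1}$ means, on each $Q_I(\omega)$ with $|I|=n$, choosing affine maps $A_{Ij}\colon Q_{Ij}(\omega)\to Q_{Ij}(\widetilde\omega)$ and extending the boundary correspondence that is $\varphi_n$ on $\partial Q_I(\omega)$ and $A_{Ij}$ on $\partial Q_{Ij}(\omega)$ to a quasiconformal homeomorphism $F_I(\omega)\to F_I(\widetilde\omega)$, then defining $\varphi_{n+1}$ to be $A_{Ij}$ on each $Q_{Ij}(\omega)$; after normalising the outer domains by affine maps this is the canonical problem of mapping one model frame onto another whose holes are rescaled by $\tfrac{1-\widetilde q_{n+1}}{1-q_{n+1}}$ and displaced by $O(|\widetilde q_{n+1}-q_{n+1}|)$.

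The technical core, which I expect to be the main obstacle, is the quantitative extension estimate: this frame map can be taken with maximal dilatation at most $\exp\!\big(C(\delta)\,d_{n+1}\big)$, where $d_{n+1}:=\max\{\,|\log\tfrac{1-\widetilde q_{n+1}}{1-q_{n+1}}|,\ |\widetilde q_{n+1}-q_{n+1}|\,\}$ and $C(\delta)>0$ depends on $\delta$ alone. I would obtain it by an explicit construction that first performs a radial rescaling of each hole --- a ring map whose dilatation is governed by a ratio of moduli and, since those moduli are bounded below in terms of $\delta$ only, costs $\log K\le C(\delta)\,|\log\tfrac{1-\widetilde q_{n+1}}{1-q_{n+1}}|$ --- and then translates the holes inside a fixed outer domain by a Beltrami deformation supported where the modulus is $\gtrsim_\delta 1$, costing $\log K\le C(\delta)\,|\widetilde q_{n+1}-q_{n+1}|$; adding the two contributions to $\log K$ gives the claim. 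The delicate points are that the constant be controlled by $\delta$ alone and that the bound on $\log K$ be \emph{linear} in $d_{n+1}$ (rather than, say, through a difference of moduli). It is convenient, and costs nothing, to arrange in addition that each such frame map be conformal on a small neighbourhood of every inner boundary circle, using that the affine maps $A_{Ij}$ are globally defined.

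Granting the estimate, every $\varphi_n$ is $K$-quasiconformal with $K:=\exp\!\big(C(\delta)\,d(\omega,\widetilde\omega)\big)$: it is affine on the level-$n$ domains and, on $F_I(\omega)$ with $|I|<n$, has dilatation at most $\exp(C(\delta)d_{|I|+1})\le K$. Since $E(\omega)$ and $E(\widetilde\omega)$ are Cantor sets, $\max_{|I|=n}\operatorname{diam}Q_I(\omega)\to0$ and $\max_{|I|=n}\operatorname{diam}Q_I(\widetilde\omega)\to0$ as $n\to\infty$; because $\varphi_m$ and $\varphi_n$ ($m>n$) agree off $E_n(\omega)$ and map each $Q_I(\omega)$, $|I|=n$, into $Q_I(\widetilde\omega)$, both $(\varphi_n)$ and $(\varphi_n^{-1})$ are uniformly Cauchy in the spherical metric. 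Hence $\varphi:=\lim_n\varphi_n$ is a self-homeomorphism of $\widehat{\mathbb C}$ with $\varphi(E(\omega))=E(\widetilde\omega)$, and, being a uniform limit of $K$-quasiconformal homeomorphisms, it is $K$-quasiconformal. This proves (1).

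For (2), suppose $\log\tfrac{1-\widetilde q_n}{1-q_n}\to0$. Since $0<1-q_n\le1$ this forces $|\widetilde q_n-q_n|=(1-q_n)\,\big|\tfrac{1-\widetilde q_n}{1-q_n}-1\big|\to0$ as well, so $d_n\to0$ and in particular $d(\omega,\widetilde\omega)<\infty$; thus the map $\varphi$ of (1) is available. Given $\varepsilon>0$, pick $N$ with $C(\delta)\,d_n\le\log(1+\varepsilon)$ for all $n>N$, and let $B$ be the compact set on which some level-$k$ frame map with $k\le N$ is non-conformal. By the conformality arranged near the inner boundary circles in the previous paragraph, $B\cap E(\omega)=\varnothing$, since every point of $E(\omega)$ lies, at each level, inside a successor domain, where those frame maps are conformal. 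Hence $U_\varepsilon:=\widehat{\mathbb C}\setminus B$ is an open neighbourhood of $E(\omega)$ on which $\varphi$ is assembled only from affine maps and from frame maps of levels $n>N$, each $(1+\varepsilon)$-quasiconformal; therefore $\varphi|_{U_\varepsilon}$ is $(1+\varepsilon)$-quasiconformal, which is the asserted asymptotic conformality.
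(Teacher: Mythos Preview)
Your proposal is correct and follows essentially the same approach as the paper: both decompose $X_{E(\omega)}$ into pairs of pants (your frames $F_I$), normalise each by an affine map, and build the global quasiconformal map by explicitly constructing frame-to-frame maps whose log-dilatation is bounded linearly by $d_{n+1}$, using that the $\delta$-lower bound keeps the relevant annular moduli bounded below by a constant depending only on $\delta$.

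The only real difference is in the factorisation of the frame map. The paper first applies a conformal dilation $z\mapsto(x_k/\widetilde x_k)z$ to align the centres of the inner circles, and then uses two radial annulus maps (your ``ring maps''): one supported near each inner hole to correct its radius, and one supported near the outer boundary to correct that radius; both estimates come from the elementary modulus lemma $d(\varphi)=\bigl|\log\frac{\log R_1-\log r_1}{\log R_2-\log r_2}\bigr|$. Your factorisation---rescale the holes first, then translate them by a compactly supported Beltrami deformation---is a legitimate alternative, but the paper's version has the advantage of being completely explicit and avoiding any soft appeal to ``a Beltrami deformation supported where the modulus is $\gtrsim_\delta 1$''; in particular it makes transparent exactly where each of the two terms $\bigl|\log\frac{1-\widetilde q_{n+1}}{1-q_{n+1}}\bigr|$ and $|\widetilde q_{n+1}-q_{n+1}|$ enters. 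Your extra arrangement of conformality near the inner boundaries in part~(2) is a nice touch but unnecessary: the union of the level-$\le N$ frames is already a compact subset of $X_{E(\omega)}$ disjoint from $E(\omega)$, so its complement serves as $U_\varepsilon$.
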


From above results and a result \cite{ShigaPrepri} Theorem 6.2, immediately we obtain;
\begin{Cor}
\label{Cor1}
	Let $E$ be a Cantor set which is a Julia set of  a rational function satisfying the conditions in Theorem \ref{Thm:Main}.
	Then, the complement of the limit set of a Schottky group $G$ is quasiconformally equivalent to $X_{E}$.
\end{Cor}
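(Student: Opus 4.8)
The plan is to deduce the statement formally from Theorem \ref{Thm:Main} and \cite{ShigaPrepri} Theorem 6.2, using that quasiconformal equivalence of Riemann surfaces is an equivalence relation. Let $G$ be a Schottky group, let $\Lambda$ denote its limit set (a Cantor set in $\widehat{\mathbb C}$), and write $\widehat{\mathbb C}\setminus\Lambda$ for its complement, an open Riemann surface with uncountably many boundary components. By \cite{ShigaPrepri} Theorem 6.2 there is a quasiconformal homeomorphism $h_1\colon \widehat{\mathbb C}\setminus\Lambda \to X_{\mathcal C}$. On the other hand, $E$ is by hypothesis the Julia set of a hyperbolic rational function of degree $d>1$ and is a Cantor set, so Theorem \ref{Thm:Main} provides a quasiconformal homeomorphism $h_2\colon X_{E}\to X_{\mathcal C}$.

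Since the inverse of a $K$-quasiconformal homeomorphism is $K$-quasiconformal, $h_2^{-1}\colon X_{\mathcal C}\to X_E$ is quasiconformal; and since the composition of a $K_1$-quasiconformal map with a $K_2$-quasiconformal map is $(K_1K_2)$-quasiconformal, the map $h_2^{-1}\circ h_1\colon \widehat{\mathbb C}\setminus\Lambda \to X_E$ is quasiconformal. Therefore $\widehat{\mathbb C}\setminus\Lambda$ is quasiconformally equivalent to $X_E$, which is exactly the assertion of the corollary.

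I do not expect any genuine obstacle: the corollary is a transitivity argument, and the only substantive inputs are the two results quoted above. The single point worth recording in the write-up is that both Theorem \ref{Thm:Main} and \cite{ShigaPrepri} Theorem 6.2 in fact yield the equivalence in the stronger form of a quasiconformal self-homeomorphism of $\widehat{\mathbb C}$ carrying one Cantor set onto the other; composing these (and one inverse) again gives such a self-homeomorphism of $\widehat{\mathbb C}$ taking $\Lambda$ onto $E$, so the conclusion holds at the level of the ambient sphere as well, not merely for the punctured surfaces.
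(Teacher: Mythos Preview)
Your argument is correct and is exactly the paper's own proof: invoke \cite{ShigaPrepri} Theorem 6.2 to get $X_{\Lambda}$ quasiconformally equivalent to $X_{\mathcal C}$, invoke Theorem \ref{Thm:Main} to get $X_{E}$ quasiconformally equivalent to $X_{\mathcal C}$, and conclude by transitivity. One caution about your final remark: Theorem \ref{Thm:Main} as stated and proved here (via Proposition \ref{Pro:qc-equiv}) only produces a quasiconformal homeomorphism between the complements, not a priori a self-map of $\widehat{\mathbb C}$; the extension to the sphere is obtained afterwards through the removability statement of Corollary \ref{Cor2}, whose proof in turn uses Corollary \ref{Cor1}, so you should not fold that strengthening into the proof of Corollary \ref{Cor1} itself.
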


As consequences of Theorem \ref{Thm:Main2} (1), we obtain;
\begin{Cor}
\label{Cor3}
		Let $E(\omega)$ be a random Cantor set for $\omega=(q_n)_{n=1}^{\infty}$.
	Suppose that $\omega$ has lower and upper bounds.
		Then, $X_{E(\omega)}$ is quasiconformally equivalent to $X_{\mathcal C}$.	
\end{Cor}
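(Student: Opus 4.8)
The plan is to recognize the middle one-third set $\mathcal C$ itself as a random Cantor set and then specialize Theorem~\ref{Thm:Main2}(1). Let $\omega_0=(1/3,1/3,\dots)$ be the constant sequence; by construction $E(\omega_0)$ is $\mathcal C$ (possibly after an affine normalization of the plane, which is conformal and so does not affect the conclusion), whence $X_{\mathcal C}=\widehat{\mathbb C}\setminus E(\omega_0)$. Write $0<\delta_0\le q_n\le M$ for the assumed lower and upper bounds on $\omega$. Then $\omega$ and $\omega_0$ share the $\delta$-lower bound $\delta:=\min\{\delta_0,\,1/3\}$, so $(\omega,\omega_0)$ is a pair to which Theorem~\ref{Thm:Main2} can be applied as soon as we know that $d(\omega,\omega_0)<\infty$.

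The next step is that finiteness check. Here $d(\omega,\omega_0)=\sup_{n\in\mathbb N}\max\bigl\{\,\bigl|\log\frac{2/3}{1-q_n}\bigr|,\;|1/3-q_n|\,\bigr\}$. The second entry is at most $\max\{1/3-\delta_0,\,M-1/3\}$ for every $n$; for the first, the upper bound $q_n\le M$ keeps $1-q_n$ bounded away from $0$ (and it is trivially $\le 1$), so $\bigl|\log\frac{2/3}{1-q_n}\bigr|$ is bounded uniformly in $n$. Hence $d(\omega,\omega_0)<\infty$, and Theorem~\ref{Thm:Main2}(1) yields an $\exp\!\bigl(C(\delta)\,d(\omega,\omega_0)\bigr)$-quasiconformal self-homeomorphism $\varphi$ of $\widehat{\mathbb C}$ with $\varphi(E(\omega))=E(\omega_0)=\mathcal C$. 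Restricting $\varphi$ to $\widehat{\mathbb C}\setminus E(\omega)$ gives a quasiconformal homeomorphism of $X_{E(\omega)}$ onto $\widehat{\mathbb C}\setminus\mathcal C=X_{\mathcal C}$ with the same dilatation, which is exactly the assertion.

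I do not expect any genuine obstacle here: the whole analytic content has been placed in Theorem~\ref{Thm:Main2}(1), and Corollary~\ref{Cor3} is only its specialization to $\widetilde\omega=(1/3,1/3,\dots)$ together with the two elementary observations above. The one place where the hypotheses are actually used is that the lower bound on $\omega$ is what lets $\omega$ and $\omega_0$ share a common $\delta$-lower bound, while the lower and upper bounds together are what force the logarithmic term of $d(\omega,\omega_0)$ to be finite; without a two-sided bound on $\omega$, the pair $(\omega,\omega_0)$ need not satisfy the hypotheses of Theorem~\ref{Thm:Main2}(1).
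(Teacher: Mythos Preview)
Your argument is correct and is exactly the paper's approach: recognize $\mathcal C=E(\omega_0)$ for the constant sequence $\omega_0=(1/3)_{n=1}^\infty$, observe that both $\omega$ and $\omega_0$ share a common $\delta$-lower bound, check that $d(\omega,\omega_0)<\infty$, and invoke Theorem~\ref{Thm:Main2}(1). The paper's own proof is the one-line remark that the statement follows immediately from Theorem~\ref{Thm:Main2}(1) once one writes $\mathcal C=E(\omega_0)$; you have simply made the routine verifications explicit.
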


We have also the following.
\begin{Cor}
\label{Cor2}
Let $E$ be a Cantor set as in Corollaries 1.1 or 1.2.
Then, the Cantor set $E$ is quasiconformally removable, that is, every quasiconformal mapping on the complement of $E$ is extended to a quasiconformal mapping on the Riemann sphere.	
\end{Cor}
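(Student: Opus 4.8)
The plan is to deduce the removability of $E$ from a single external input --- that the middle-third Cantor set $\mathcal C$ is quasiconformally removable --- by transporting the removability property along the \emph{global} quasiconformal homeomorphisms of $\widehat{\mathbb C}$ supplied by Theorems \ref{Thm:Main} and \ref{Thm:Main2}. First I would recall the classical fact that a compact set $F\subset\widehat{\mathbb C}$ of $\sigma$-finite one-dimensional Hausdorff measure is quasiconformally removable, i.e.\ every homeomorphism of $\widehat{\mathbb C}$ that is quasiconformal on $\widehat{\mathbb C}\setminus F$ is quasiconformal on all of $\widehat{\mathbb C}$. Since $\dim_H\mathcal C=\log 2/\log 3<1$ we have $\mathcal H^1(\mathcal C)=0$, so $\mathcal C$ is quasiconformally removable. (For random Cantor sets this applies directly: if $\omega=(q_n)$ has $\delta$-lower bound, the $2^n$ generation-$n$ intervals have total length $\prod_{k=1}^{n}(1-q_k)\le(1-\delta)^{n}\to 0$, so $\mathcal H^1(E(\omega))=0$; but Julia sets of hyperbolic maps may have dimension $>1$, so for those I must route through $\mathcal C$.)

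Next I would record the transfer principle: if $\psi\colon\widehat{\mathbb C}\to\widehat{\mathbb C}$ is quasiconformal and $F_2=\psi(F_1)$, then $F_1$ is quasiconformally removable whenever $F_2$ is. Indeed, given a homeomorphism $f$ of $\widehat{\mathbb C}$ that is quasiconformal off $F_1$, the conjugate $h=\psi\circ f\circ\psi^{-1}$ is a homeomorphism of $\widehat{\mathbb C}$ quasiconformal off $F_2$, hence (by removability of $F_2$) quasiconformal everywhere, and therefore $f=\psi^{-1}\circ h\circ\psi$ is quasiconformal on all of $\widehat{\mathbb C}$.

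Then I would invoke the main theorems. For $E=\mathcal J$ a Cantor Julia set of a hyperbolic rational map, the equivalence in Theorem \ref{Thm:Main} is realized by a quasiconformal homeomorphism of the whole sphere carrying $\mathcal J$ onto $\mathcal C$ (that is how the map is built there). For $E=E(\omega)$ with lower and upper bounds, apply Theorem \ref{Thm:Main2}(1) with $\widetilde\omega=(1/3)_{n=1}^{\infty}$: the bounds on $\omega$ make $d(\omega,\widetilde\omega)<\infty$, so one gets a quasiconformal homeomorphism of $\widehat{\mathbb C}$ taking $E(\omega)$ onto $E(\widetilde\omega)$, and $E(\widetilde\omega)$, having constant ratio $1/3$, is carried onto $\mathcal C$ by a global bi-Lipschitz (in particular quasiconformal) homeomorphism. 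In either case $E$ is globally quasiconformally equivalent to $\mathcal C$, so by the previous two paragraphs $E$ is quasiconformally removable. To match the wording of the statement: a quasiconformal map $g$ of $X_E$ onto another Riemann surface has image of genus $0$ all of whose ideal boundary components are points, hence the image is $\widehat{\mathbb C}\setminus E'$ for a Cantor set $E'$; since $E$ (hence $E'$) is totally disconnected, a standard argument --- separating each point of $E$ from the rest by arbitrarily small Jordan curves in $X_E$ and tracking their images --- shows $g$ extends to a homeomorphism $\bar g$ of $\widehat{\mathbb C}$ with $\bar g(E)=E'$; as $\bar g$ is quasiconformal off $E$, the removability of $E$ makes $\bar g$ quasiconformal on $\widehat{\mathbb C}$, which is the assertion.

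The only genuinely substantial ingredient is the removability of $\mathcal C$ itself, which is classical but external to this paper. On the logical side, the point requiring care is that one must use the \emph{global} quasiconformal maps produced in the proofs of Theorems \ref{Thm:Main} and \ref{Thm:Main2}: knowing merely that $X_E$ and $X_{\mathcal C}$ are quasiconformally equivalent as abstract Riemann surfaces would be circular here, since upgrading such an equivalence to a homeomorphism of $\widehat{\mathbb C}$ that is quasiconformal across $E$ is precisely the removability one is trying to establish.
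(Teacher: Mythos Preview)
Your argument is correct in substance, and the overall architecture---pick a Cantor set you already know to be removable, then transport removability along a quasiconformal equivalence---is exactly what the paper does. The difference is in the choice of anchor set: the paper routes through the limit set $\Lambda$ of a Schottky group rather than through $\mathcal C$. Concretely, the paper takes the quasiconformal map $\varphi:X_\Lambda\to X_E$ from Corollary~\ref{Cor1}, composes an arbitrary quasiconformal $\psi$ on $X_E$ with $\varphi$ to get $\Phi=\psi\circ\varphi$ on $X_\Lambda$, quotes from \cite{ShigaKlein} that every quasiconformal map on $X_\Lambda$ extends to one on $\widehat{\mathbb C}$, and concludes that $\psi=\Phi\circ\varphi^{-1}$ extends. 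Your version replaces $\Lambda$ by $\mathcal C$ and replaces the citation to \cite{ShigaKlein} by the classical fact that sets of zero $\mathcal H^1$-measure are removable. Both work; yours is arguably more self-contained, while the paper's leans on the author's earlier result.

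One point to correct: you assert that the proof of Theorem~\ref{Thm:Main} already builds a quasiconformal homeomorphism of the whole sphere taking $\mathcal J$ to $\mathcal C$. It does not. That proof works near the ideal boundary and then invokes Proposition~\ref{Pro:qc-equiv}, which only yields a quasiconformal map $X_{\mathcal J}\to X_{\mathcal C}$ between the abstract surfaces. This is not fatal to your approach, and in fact the fix is exactly the composition trick you (and the paper) use elsewhere: extend the Theorem~\ref{Thm:Main} map to a homeomorphism of $\widehat{\mathbb C}$ using total disconnectedness, observe that its \emph{inverse} is a homeomorphism of $\widehat{\mathbb C}$ quasiconformal off $\mathcal C$, and apply the removability of $\mathcal C$ (which you have already established) to make it globally quasiconformal. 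There is no circularity here---you are using removability of $\mathcal C$, not of $\mathcal J$---so your worry in the last paragraph is slightly misplaced: the ``abstract'' equivalence from Theorem~\ref{Thm:Main} is in fact enough, once removability of $\mathcal C$ is in hand.
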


It is known (\cite{Sario-Nakai} V. 11F. Theorem) that the random Cantor set $E(\omega)$ for $\omega$ is of capacity zero if and only if
\begin{eqnarray}
\label{eqn:Cap=0}
	\prod_{n=1}^{\infty}(1-q_n)^{2^{-n}}=0.
\end{eqnarray}
Hence if $\{q_n\}_{n=1}^{\infty}$ rapidly converges to one as it satisfies (\ref{eqn:Cap=0}), then $X_{E(\omega)}$ is not quasiconformally equivalent to $X_{\mathcal C}$ because the positivity of the capacity of closed sets in the plane is preserved by quasiconformal mappings (cf. \cite{Sario-Nakai} III. Theorem 8 H).
In fact, we can say more:
\begin{Mythm}
\label{Them:Main3}
	If $\omega$ does not have an upper bound, then $X_{E(\omega)}$ is not quasiconformally equivalent to $X_{\mathcal C}$.
\end{Mythm}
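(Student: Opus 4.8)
The plan is to produce a quasiconformal invariant that separates $X_{E(\omega)}$ from $X_{\mathcal C}$ exactly when $\omega$ has no upper bound, namely \emph{uniform perfectness} of the boundary Cantor set. Recall that a compact set $E\subset\widehat{\mathbb C}$ with $\#E\ge 2$ is uniformly perfect if and only if there is a finite constant $M(E)$ bounding the moduli of all \emph{essential} annuli in $\widehat{\mathbb C}\setminus E$, where a topological annulus $A\subset\widehat{\mathbb C}\setminus E$ is called essential when each of the two components of $\widehat{\mathbb C}\setminus A$ meets $E$ (equivalently, a core curve of $A$ does not bound a disk in $\widehat{\mathbb C}\setminus E$); this characterization goes back to Pommerenke. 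Two facts I would only cite: the standard Cantor set $\mathcal C$ is uniformly perfect, by self-similarity, so $M(\mathcal C)<\infty$; and $M$ is a quasiconformal quasi-invariant, because a $K$-quasiconformal homeomorphism $\varphi\colon X_E\to X_{E'}$ carries essential annuli to essential annuli --- a homeomorphism of the surfaces preserves the property that a Jordan curve bounds a disk in the surface --- and changes their moduli by at most the factor $K$. It therefore suffices to show that $\sup_n q_n=1$ forces $M(E(\omega))=\infty$.

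For this I would exhibit explicit essential annuli of arbitrarily large modulus in $X_{E(\omega)}$. Normalize so that $E(\omega)\subset[0,1]$, and let $L_n=2^{-n}\prod_{k=1}^{n}(1-q_k)$ be the common length of the $2^n$ components of the $n$-th generation $E_n$, so that $L_0=1$ and $L_n=\tfrac12(1-q_n)L_{n-1}$. Passing from $E_{n-1}$ to $E_n$ replaces the left-most component $[0,L_{n-1}]$ by $[0,L_n]$ and $[L_{n-1}-L_n,\,L_{n-1}]$, removing the open gap $(L_n,\,L_{n-1}-L_n)$ of length $q_nL_{n-1}$; no point of $E(\omega)$ lies in this gap or in $\{\operatorname{Re} z<0\}$. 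Since $q_nL_{n-1}\le L_{n-1}-L_n$, the round annulus
\[
  A_n=\{\,z\in\mathbb C:\ L_n<|z|<q_nL_{n-1}\,\}
\]
is contained in $\widehat{\mathbb C}\setminus E(\omega)$, and it is essential: the bounded component of $\widehat{\mathbb C}\setminus A_n$ contains the nonempty set $E(\omega)\cap[0,L_n]$, while the unbounded one contains $E(\omega)\cap[L_{n-1}-L_n,\,L_{n-1}]\ne\varnothing$. Its modulus equals
\[
  \operatorname{mod}(A_n)=\frac{1}{2\pi}\log\frac{q_nL_{n-1}}{L_n}=\frac{1}{2\pi}\log\frac{2q_n}{1-q_n},
\]
and $A_n$ is a genuine annulus as soon as $q_n>\tfrac13$; hence along any subsequence on which $q_n\to 1$ we obtain $\operatorname{mod}(A_n)\to\infty$, so $M(E(\omega))=\infty$. (Should the generations of $E(\omega)$ be formed by a rule other than removing the central portion, the same estimate holds with $[0,L_{n-1}]$ replaced by any component of $E_{n-1}$ sitting at an extreme end of $E_{n-1}$, all of whose $E_n$-children and other $E(\omega)$-neighbours lie at distance $\gtrsim q_nL_{n-1}$.)

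Putting the two parts together: if there were a $K$-quasiconformal homeomorphism $\varphi\colon X_{E(\omega)}\to X_{\mathcal C}$, then the $\varphi(A_n)$ would be essential annuli in $X_{\mathcal C}$ with $\operatorname{mod}(\varphi(A_n))\ge K^{-1}\operatorname{mod}(A_n)\to\infty$, contradicting $M(\mathcal C)<\infty$; this establishes Theorem \ref{Them:Main3}. I expect the only delicate points to be the two classical inputs quoted in the first paragraph --- that an embedded essential annulus is sent to an essential annulus by a homeomorphism between the two surfaces (a short separation/homotopy argument, using that both complementary components of a core curve genuinely contain points of the Cantor set), and the uniform perfectness of the self-similar set $\mathcal C$, which I would take from the literature on uniformly perfect sets rather than reprove. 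The annulus construction itself is elementary, and it is precisely where the hypothesis that $\omega$ has no upper bound is used.
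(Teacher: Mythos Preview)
Your argument is correct, and the core geometric step---building round essential annuli $A_n$ in $X_{E(\omega)}$ whose moduli blow up along a subsequence with $q_n\to 1$---is exactly what the paper does as well. The only difference is in the packaging of the quasiconformal invariant. You phrase it as \emph{uniform perfectness}: $\mathcal C$ is uniformly perfect, so essential annuli in $X_{\mathcal C}$ have uniformly bounded modulus, and moduli are $K$-quasi-invariant. The paper phrases it dually in terms of hyperbolic length: $X_{\mathcal C}$ has a positive lower bound $d$ on the lengths of simple closed geodesics, and by Wolpert's inequality a $K$-quasiconformal image cannot contain essential curves shorter than $K^{-1}d$; the large-modulus annuli then furnish arbitrarily short core curves in $X_{E(\omega)}$. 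Via the collar lemma these two invariants are equivalent (a closed geodesic of length $\ell$ sits inside an embedded essential annulus of modulus $\asymp \ell^{-1}$, and conversely), so the two proofs are really the same argument in different language. Your version has the minor advantage that the annulus modulus is computed exactly rather than estimated, and you do not need to cite Wolpert; the paper's version avoids appealing to the Pommerenke characterization and the uniform perfectness of $\mathcal C$ as black boxes.
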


The proof of Theorem \ref{Thm:Main2} gives the following.
\begin{Cor}
\label{Cor:Hausdorff}
Let $\omega$ and $\widetilde\omega$ be sequences satisfying the same conditions as in Theorem \ref{Thm:Main2} (2).
Then, the Hausdorff dimension of $E(\widetilde\omega)$ is the same as that of $E(\omega)$.
\end{Cor}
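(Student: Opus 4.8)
The plan is to exploit the explicit geometric description of $E(\omega)$ and $E(\widetilde\omega)$ that is already obtained in the proof of Theorem~\ref{Thm:Main2}. Both are Cantor sets of homogeneous Moran type built on one and the same combinatorial tree: there is an integer $N\ge 2$, fixed by the construction, so that the generation-$n$ family of $E(\omega)$ consists of $N^{n}$ mutually congruent pieces of a common diameter
\[
	\rho_n(\omega)=\rho_0\prod_{k=1}^{n}\frac{1-q_k}{N}
\]
($\rho_0$ being the diameter of the initial piece), and likewise $\rho_n(\widetilde\omega)=\rho_0\prod_{k=1}^{n}\frac{1-\tilde q_k}{N}$; moreover the mapping produced in the proof of Theorem~\ref{Thm:Main2} carries the generation-$n$ pieces of $E(\omega)$ onto those of $E(\widetilde\omega)$. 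Because $\omega$ and $\widetilde\omega$ have a $\delta$-lower bound, inside each parent the $N$ children are separated by gaps whose diameter is a definite fraction of the children's diameter, uniformly in $n$, so the Moran structure condition holds and the standard Hausdorff-dimension formula for homogeneous Moran sets gives
\[
	\dim_H E(\omega)=\liminf_{n\to\infty}\frac{n\log N}{D_n(\omega)},\qquad
	\dim_H E(\widetilde\omega)=\liminf_{n\to\infty}\frac{n\log N}{D_n(\widetilde\omega)},
\]
where $D_n(\omega)=-\log\rho_{m(n)}(\omega)+c_0$ with $m(n)=n+O(1)$ and $c_0$ a constant, the index shift $m(n)-n$ and the constant $c_0$ being determined by the (common) combinatorial tree only.

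It remains to compare the two lower limits, and this is an elementary averaging argument. Put $a_n:=\log\frac{1-\tilde q_n}{1-q_n}$; by hypothesis $a_n\to 0$, so $\sum_{k=1}^{n}|a_k|=o(n)$ by Ces\`aro summation. From the product formulas, $-\log\rho_\ell(\widetilde\omega)=-\log\rho_\ell(\omega)-\sum_{k=1}^{\ell}a_k$ for every $\ell$, hence $D_n(\widetilde\omega)-D_n(\omega)=-\sum_{k=1}^{m(n)}a_k$ and so $|D_n(\widetilde\omega)-D_n(\omega)|=o(n)$. On the other hand $q_k\in(0,1)$ gives $-\log(1-q_k)>0$, whence $D_n(\omega)\ge n\log N+c_1$ for a constant $c_1$, and in particular $D_n(\omega)\to\infty$. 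Therefore
\[
	\frac{D_n(\widetilde\omega)}{D_n(\omega)}=1+\frac{D_n(\widetilde\omega)-D_n(\omega)}{D_n(\omega)}\longrightarrow 1\qquad(n\to\infty),
\]
so the positive sequences $b_n:=\frac{n\log N}{D_n(\omega)}$ and $c_n:=\frac{n\log N}{D_n(\widetilde\omega)}$ satisfy $b_n/c_n\to 1$. Since two positive sequences whose ratio tends to $1$ have the same lower limit, $\liminf b_n=\liminf c_n$, i.e.\ $\dim_H E(\widetilde\omega)=\dim_H E(\omega)$.

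Since the computation is entirely elementary, I do not anticipate a serious obstacle; all the weight of the argument rests on the geometric description of $E(\omega)$ furnished by the proof of Theorem~\ref{Thm:Main2} and on the remark that the $\delta$-lower bound makes the construction a uniformly separated homogeneous Moran set — this uniform separation is the one hypothesis that must not be dispensed with, for otherwise the Hausdorff dimension need not be the displayed $\liminf$. One can also sidestep the Moran formula altogether: by Theorem~\ref{Thm:Main2}~(2) there is a quasiconformal $\varphi$ on $\widehat{\mathbb C}$ with $\varphi(E(\omega))=E(\widetilde\omega)$ that is $(1+\varepsilon)$-quasiconformal on a neighborhood of $E(\omega)$ for every $\varepsilon>0$; since a $K$-quasiconformal map is locally $\frac1K$-H\"older, $\varphi$ is locally $\frac{1}{1+\varepsilon}$-H\"older near $E(\omega)$ and $\varphi^{-1}$ is locally $\frac{1}{1+\varepsilon}$-H\"older near $E(\widetilde\omega)$, whence $\frac{1}{1+\varepsilon}\dim_H E(\omega)\le\dim_H E(\widetilde\omega)\le(1+\varepsilon)\dim_H E(\omega)$ for all $\varepsilon>0$, and letting $\varepsilon\to0$ gives the conclusion.
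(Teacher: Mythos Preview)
Both approaches you outline are correct. Your second one --- using the asymptotically conformal map from Theorem~\ref{Thm:Main2}~(2) together with the local H\"older regularity of quasiconformal maps, then letting $\varepsilon\to 0$ and arguing symmetrically via $\varphi^{-1}$ --- is exactly the paper's own proof, with one cosmetic difference: the paper invokes Astala's sharp dimension--distortion inequality
\[
\dim_H(f(E))\le \frac{2K\dim_H(E)}{2+(K-1)\dim_H(E)}
\]
rather than the cruder Mori bound $\dim_H(f(E))\le K\dim_H(E)$ that follows from $1/K$-H\"older continuity. Either estimate collapses to equality as $K\to 1$, which is all that is needed here.

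Your first approach, computing $\dim_H E(\omega)$ directly via the homogeneous Moran formula and then comparing the two $\liminf$'s by Ces\`aro averaging of $a_n=\log\frac{1-\tilde q_n}{1-q_n}$, is a genuinely different route. It is more elementary in that it avoids quasiconformal distortion theory altogether; its price is that one must import (or prove) the Moran dimension formula, whose separation hypothesis is precisely what the $\delta$-lower bound supplies (this is the content of the lemma in \S4 of the paper, which gives $|J_{k+1}^{i}|\ge 2\delta|I_{k+1}^{1}|$). The paper's approach, by contrast, uses nothing about the internal structure of $E(\omega)$ beyond the existence of the asymptotically conformal map, and would therefore apply verbatim to any pair of compact sets related by such a map.
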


\noindent
{\bf Acknowledgement.}
The author thanks Prof. K. Matsuzaki for his valuable comments.
This research was partly done during the author's stay in the Erwin Schr\"odinger institute at Vienna. He also thanks the institute for its brilliant support to his research.

\section{Preliminaries}
\subsection{Complex dynamics.}
We begin with a small and brief introduction of complex dynamics.
We may refer textbooks on the topic, e.~ g.~ \cite{Carleson-Gamelin} for a general theory of complex dynamics.

Let $f$ be a rational function of degree $d>1$ on $\mathbb C$.
We denote by $f^n$ the $n$ times iterations of $f$.
The Fatou set $\mathcal F$ of $f$ is the set of points in $\widehat{\mathbb C}$ which have neighborhoods where $\{f^n\}_{n=1}^{\infty}$ is a normal family.
The complement of $\mathcal F$, which is denoted by $\mathcal J$, is called the Julia set of $f$.

A rational function $f$ is called \emph{hyperbolic} if it is expanding near $\mathcal J$.
More precisely, if $\mathcal J \not\ni\infty$, then $f$ is hyperbolic if there exist a constant $A>1$ and a smooth metric $\sigma (z)|dz|$ in a neighborhood $U$ of $\mathcal J$ such that
\begin{equation*}
	\sigma (f(z))|f' (z)|\geq A\sigma (z), \quad z\in \mathcal J
\end{equation*}
(see \cite{Carleson-Gamelin} V. 2).
If $\infty\in \mathcal J$, the hyperbolicity of $f$ is defined by conjugation of M\"obius transformations as usual. 

The hyperbolicity is also characterized in terms of the Euclidean metric and the dynamical behavior of rational functions as well.
\begin{Pro}[\cite{Carleson-Gamelin} V. 2. Lemma 2.1 and Theorem 2.2]
\label{Pro:hyperbolic}
	A rational function $f$ is hyperbolic if and only if every critical point belongs to $\mathcal F$ and is attracted to an attracting cycle.
	If $\infty\not\in \mathcal J$, then the hyperbolicity of $f$ is equivalent to the existence of $m\geq 1$ such that $|(f^m)'|>1$ on $\mathcal J$.
\end{Pro}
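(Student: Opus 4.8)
All the notions appearing in the statement --- the Julia set, the critical set, the Fatou components, and the existence of an expanding conformal metric --- are invariant under conjugation by M\"obius transformations, so for the first equivalence we may assume $\infty\notin\mathcal J$ and work with the Euclidean metric near $\mathcal J$; for the second equivalence this is part of the hypothesis. Write $\mathrm{Crit}(f)$ for the set of critical points of $f$ and $P(f)=\overline{\bigcup_{n\ge1}f^{n}(\mathrm{Crit}(f))}$ for the closure of the postcritical set.

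\emph{Hyperbolicity implies that every critical point is attracted to an attracting cycle.} Let $\sigma(z)|dz|$ be an expanding metric on a neighborhood $U$ of $\mathcal J$. Since $\sigma(f(z))|f'(z)|\ge A\,\sigma(z)>0$ on $\mathcal J$, we get $f'\ne0$ on $\mathcal J$, hence $\mathrm{Crit}(f)\subset\mathcal F$. Iterating the expansion along the forward-invariant compact set $\mathcal J$ yields $\|(f^{n})'\|_{\sigma}\ge A^{n}$ at every point of $\mathcal J$, so any point whose entire forward orbit stays in $U$ must lie in $\mathcal J$. Feeding this into the classification of periodic Fatou components (together with the absence of wandering domains) rules out parabolic basins and rotation domains --- in each case orbits, or the boundary of a rotation domain, would be forced arbitrarily close to $\mathcal J$ --- so every Fatou component is eventually carried into the immediate basin of an attracting cycle. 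Applying this to the Fatou component of a critical point gives the conclusion.

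\emph{Conversely, if every critical point is attracted to an attracting cycle, then $f$ is hyperbolic.} Under this hypothesis $\bigcup_{n\ge1}f^{n}(\mathrm{Crit}(f))$ lies in a finite union of immediate attracting basins, so $P(f)$ is a compact subset of $\mathcal F$; in particular $P(f)\cap\mathcal J=\emptyset$. Put $\Omega=\widehat{\mathbb C}\setminus P(f)$ and assume first $\#P(f)\ge3$, so that $\Omega$ is hyperbolic. Since every critical value lies in $P(f)$, the restriction $f\colon f^{-1}(\Omega)\to\Omega$ is an unbranched holomorphic covering, hence a local isometry for the Poincar\'e metrics: $\rho_{\Omega}(f(z))\,|f'(z)|=\rho_{f^{-1}(\Omega)}(z)$. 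Moreover $f^{-1}(\Omega)$ is a \emph{proper} hyperbolic subdomain of $\Omega$ --- were it all of $\Omega$, the set $P(f)$ would be a completely invariant closed set disjoint from the nonempty set $\mathcal J$, which is impossible --- so by the Schwarz--Pick lemma $\rho_{f^{-1}(\Omega)}>\rho_{\Omega}$ on $f^{-1}(\Omega)$, and this subdomain contains the compact set $\mathcal J$. Hence $\rho_{\Omega}(f(z))\,|f'(z)|\ge A\,\rho_{\Omega}(z)$ on $\mathcal J$ for some $A>1$, i.e. $\sigma=\rho_{\Omega}$ is an expanding metric and $f$ is hyperbolic. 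The finitely many exceptional configurations with $\#P(f)\le2$ (necessarily $f$ conjugate to $z\mapsto z^{\pm d}$) are checked directly.

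\emph{The Euclidean reformulation when $\infty\notin\mathcal J$.} If $f$ is hyperbolic with expanding metric $\sigma$, then for $z\in\mathcal J$ we have $|(f^{n})'(z)|\ge A^{n}\,(\min_{\mathcal J}\sigma)/(\max_{\mathcal J}\sigma)$, which exceeds $1$ once $A^{n}$ overcomes this bounded-distortion constant, so one may take $m=n$. Conversely, if $|(f^{m})'|>1$ on $\mathcal J$, pick $\lambda\in(0,1)$ close enough to $1$ that $\lambda^{m}|(f^{m})'|\ge1$ on $\mathcal J$ and set $\sigma(z)=\sum_{k=0}^{m-1}\lambda^{k}|(f^{k})'(z)|$, which is smooth near $\mathcal J$; a direct computation gives $\sigma(f(z))\,|f'(z)|=\lambda^{-1}\sigma(z)+\lambda^{-1}\bigl(\lambda^{m}|(f^{m})'(z)|-1\bigr)\ge\lambda^{-1}\sigma(z)$ on $\mathcal J$, so $\sigma$ is expanding with $A=\lambda^{-1}>1$. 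The one step I expect to require real care is the first implication --- excluding wandering domains and rotation domains for an expanding map, i.e. the appeal to the classification of Fatou components --- whereas the remaining implications are the short Poincar\'e-metric and change-of-metric arguments above. All of this is precisely the content of \cite{Carleson-Gamelin} V.2, Lemma 2.1 and Theorem 2.2, which we invoke here.
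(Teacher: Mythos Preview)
The paper does not prove this proposition at all; it is stated as a citation of \cite{Carleson-Gamelin} V.2, Lemma~2.1 and Theorem~2.2, with no argument given. Your sketch correctly reproduces the standard proof from that reference --- the Poincar\'e-metric argument on $\widehat{\mathbb C}\setminus P(f)$ for one direction and the change-of-metric trick $\sigma=\sum_{k=0}^{m-1}\lambda^{k}|(f^{k})'|$ for the Euclidean reformulation --- so there is nothing to compare beyond noting that you have supplied what the paper merely cites.
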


\subsection{Random Cantor sets} (cf. \cite{Sario-Nakai} I. 6).
\label{sebsec.Random Cantor sets}
Let $\omega=(q_n)_{n=1}^{\infty}=(q_1, q_2, \dots )$ be a sequence of real numbers with $0<q_n<1$ for each $n\in \mathbb N$.
We construct a Cantor set $E(\omega)$ for $\omega$ inductively as follows.

First, we remove an open interval $J_1$ of length $q_1$ from $E_0:=I=[0, 1]$ so that $I\setminus J_1$ consists of two closed intervals $I_1^{1}, I_1^{2}$ of the same length.
We put $E_1=\cup_{i=1}^{2}I_1^{i}$.
We remove an open interval of length $|I_1^{i}|q_2$ from each $I_1^{i}$ so that the remainder $E_2$ consists of four closed intervals of the same length, where $|J|$ is the length of an interval $J$.
Inductively, we define $E_{k+1}$ from $E_k=\cup_{i=1}^{2^k}I_k^{i}$ by removing an open interval of length $|I_k^{i}|q_{k+1}$ from each closed interval $I_k^{i}$ of $E_k$ so that $E_{k+1}$ consists of $2^{k+1}$ closed intervals of the same length.
The random Cantor set $E(\omega)$ for $\omega$ is defined by
\begin{equation*}
	E(\omega)=\cap_{k=1}^{\infty}E_k.
\end{equation*}
It is a generalization of the standard middle one-third Cantor set $\mathcal C$.
In fact, $\mathcal C=E(\omega_0)$ for $\omega_0=(\frac{1}{3})_{n=1}^{\infty}=(\frac{1}{3}, \frac{1}{3}, \dots )$.

We say that a sequence $\omega=(q_n)_{n=1}^{\infty}$ as above is of ($\delta$-)lower bound if there exists a $\delta>0$ such that $q_n\geq\delta$ for any $n\in \mathbb N$.
We also say that a sequence $\omega$ has a ($\delta$-)upper bound if $q_n\leq 1-\delta$ for any $n\in\mathbb N$.

\subsection{Hausdorff dimension.}
Let $E$ be a subset of $\mathbb C$ and $\alpha >0$.
We consider a countable open covering $\{U_i\}_{i\in \mathbb N}$ of $E$
with $\textrm{diam} (U_i)<r$ for a given $r>0$.
Then, we set
\begin{equation*}
	\Lambda_{\alpha}^{r}(E):=\inf\left\{\sum_{i\in \mathbb N}(\textrm{diam} (U_i) )^{\alpha} \right\},
\end{equation*}
where the infimum is taken over all countable open covering $\{U_i\}_{i\in \mathbb N}$ with $\textrm{diam}(U_i)<r$.
We put
\begin{equation*}
	\Lambda_{\alpha}(E)=\lim_{r\to 0}\Lambda_{\alpha}^{r}(E)
\end{equation*}
and the Hausdorff dimension $\textrm{dim}_{H}(E)$ of $E$ by
\begin{equation*}
	\textrm{dim}_{H}(E)=\inf \{ \alpha \mid \Lambda_{\alpha}(E)=0 \}.
\end{equation*}

\subsection{The quasiconformal equivalence of open Riemann surfaces.}
We say that two Riemann surfaces $R_1, R_2$ are quasiconformally equivalent if there exists a quasiconformal homeomorphism between them. 
We also say that they are quasiconformally equivalent near the ideal boundary if there exist compact subset $K_j$ of $R_j$ $(j=1, 2)$ and quasiconformal homeomorphism $\varphi$ from $R_1\setminus K_1$ onto $R_2\setminus K_2$.

It is obvious that if $R_1, R_2$ are quasiconformally equivalent, then they are quasiconformally equivalent near the ideal boundary.
On the other hand, we have shown that the converse is not true in general.
In fact, we have constructed two Riemann surfaces which are not quasiconformally equivalent while they are homeomorphic to each other and quasiconformally equivalent near the ideal boundary (\cite{ShigaPrepri} Example 3.1). 
We also give a sufficient condition for Riemann surfaces to be quasiconformally equivalent (\cite{ShigaPrepri} Theorem 5.1).
\begin{Pro}
\label{Pro:qc-equiv}
Let $R_1, R_2$ be open Riemann surfaces which are homeomorphic to each other and quasiconformally  equivalent near the ideal boundary. 
If the genus of $R_1$ is finite, then $R_1$ and $R_2$ are quasiconformally equivalent.	
\end{Pro}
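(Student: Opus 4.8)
The plan is to reduce the statement to ``filling in a compact core'' and then to glue. Since $R_1$ has finite genus $g$, one can choose a compact bordered subsurface $P_1\subset R_1$, with $\gamma_1:=\partial P_1$ a finite union of real-analytic Jordan curves, such that $K_1\subset\operatorname{int}P_1$ and $P_1$ carries the whole genus of $R_1$; then $\Omega_1:=R_1\setminus\operatorname{int}P_1$ is planar and is a neighbourhood of the ideal boundary of $R_1$. The goal is to build a \emph{compatible} decomposition $R_2=P_2\cup_{\gamma_2}\Omega_2$, in which $P_2$ is a compact bordered Riemann surface of the same topological type as $P_1$, $\Omega_2$ is planar, and there is a quasiconformal homeomorphism $g\colon\Omega_1\to\Omega_2$ respecting the boundary correspondence $\gamma_1\leftrightarrow\gamma_2$. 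The homeomorphism between $R_1$ and $R_2$ fixes the topological type of $P_2$ and the matching of boundary curves, so the real content is the quasiconformal map $g$.

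Producing $g$ is the step I expect to be the main obstacle. The map $\varphi$ makes $\Omega_1$ quasiconformally equivalent to its image $\varphi(\Omega_1)\subset R_2\setminus K_2$, but $\varphi$ need not respect the splitting of $R_2$ into its compact core and a neighbourhood of its ideal boundary: the ends of $R_2$ can be ``mixed'' by $\varphi$ with the ends created by deleting $K_2$ (already when $R_1=R_2=\mathbb C$, $K_1=K_2=\{0\}$ and $\varphi(z)=1/z$). To get around this I would enlarge the exceptional compacta only by growing the cores along one-sided collars of the analytic boundary curves---so as not to destroy the quasiconformal identification far out in $\Omega_1$---and iterate this, using the homeomorphism $R_1\cong R_2$ to straighten out the correspondence of ends, with the finiteness of the genus guaranteeing that the cores remain compact bordered surfaces of a fixed topological type at every stage. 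That some hypothesis of this kind is indispensable, beyond being homeomorphic and quasiconformally equivalent near the ideal boundary, is shown by \cite{ShigaPrepri} Example 3.1, so this step genuinely uses the finiteness of the genus.

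Granting $g\colon\Omega_1\to\Omega_2$, the core is filled in as follows. The boundary values $g|_{\gamma_1}\colon\gamma_1\to\gamma_2$ are quasisymmetric, being the restriction of a quasiconformal map to analytic curves. Since $P_1$ and $P_2$ are compact bordered Riemann surfaces of the same topological type, such a quasisymmetric identification of their boundaries extends to a quasiconformal homeomorphism $\psi\colon P_1\to P_2$ with $\psi|_{\gamma_1}=g|_{\gamma_1}$: fix a diffeomorphism $P_1\to P_2$ in the mapping class determined by $g|_{\gamma_1}$, pass to conformal collars of the boundary curves, and interpolate the diffeomorphism to $g|_{\gamma_1}$ across these collars using the quasiconformal extension of quasisymmetric circle homeomorphisms.

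Finally, define $\Phi:=\psi$ on $P_1$ and $\Phi:=g$ on $\Omega_1$. By construction $\Phi$ is a homeomorphism of $R_1$ onto $R_2$ which is quasiconformal on $\operatorname{int}P_1$ and on $R_1\setminus\gamma_1$; since $\gamma_1$ is a finite union of analytic Jordan curves, and such curves are quasiconformally removable, $\Phi$ is quasiconformal on all of $R_1$. Thus $R_1$ and $R_2$ are quasiconformally equivalent.
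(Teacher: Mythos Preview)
The paper does not prove Proposition~\ref{Pro:qc-equiv}; it is quoted from \cite{ShigaPrepri}, Theorem~5.1, and is used here as a black box. So there is no proof in this paper to compare your proposal against.

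That said, your proposal is an outline rather than a proof, and you yourself identify the gap. The construction of the compatible decomposition $R_2=P_2\cup_{\gamma_2}\Omega_2$ together with the quasiconformal map $g\colon\Omega_1\to\Omega_2$ is the entire content of the proposition; the gluing step afterwards is routine. Your treatment of this step consists only of the sentence ``I would enlarge the exceptional compacta only by growing the cores along one-sided collars \dots\ and iterate this, using the homeomorphism $R_1\cong R_2$ to straighten out the correspondence of ends.'' This is not an argument: you have not explained why such an iteration terminates, why the enlarged cores remain compact, or how the a priori unrelated maps $\varphi$ (quasiconformal off $K_1$) and the global homeomorphism $R_1\to R_2$ can be made compatible at the level of ends. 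Your own example $\varphi(z)=1/z$ on $\mathbb C\setminus\{0\}$ shows that $\varphi$ can permute the ideal boundary ends with the ends created by deleting $K_1$, and nothing you have written rules out more pathological permutations when the end space is a Cantor set, as it is in the applications of this proposition in the present paper. To turn the outline into a proof you would need, at minimum, a lemma showing that after enlarging $K_1$ and $K_2$ suitably one may assume $\varphi$ respects the partition of ends of $R_i\setminus K_i$ into ``ideal'' ends (of $R_i$) and ``interior'' ends (toward $K_i$); this is where the finite-genus hypothesis must do real work, and it is not supplied.
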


\section{Proof of Theorem \ref{Thm:Main}}
Let $f$ be a hyperbolic rational function with the Cantor Julia set $\mathcal J$.
We show that $X_{\mathcal J}$ is quasiconformally equivalent to $X_{\mathcal C}$.
By Proposition \ref{Pro:qc-equiv}, it suffices to show that there exists a compact subset $K$ of $\mathcal F$ such that $\mathcal F\setminus K$ is quasiconformally equivalent to the complement of a compact subset of $X_{\mathcal C}$.

Considering the conjugation by M\"obius transformations,
we may assume that $\mathcal J$ does not contain $\infty$.
Since $\mathcal J$ is a Cantor set, the Fatou set $\mathcal F$ is connected.
Furthermore, it follows from Proposition \ref{Pro:hyperbolic} that $\mathcal F$ contains an attracting fixed point $z_0$ of $f$.
Then, we may find a simply connected neighborhood $\Omega_0$ of $z_0$ such that $f(\overline{\Omega}_0)\subset \Omega_0$.
We may take $\Omega_0$ so that the boundary $\partial\Omega_0$ is a smooth Jordan curve and it does not contain the forward orbits of critical points of $f$.

For each $k\in \mathbb N$, let $\Omega_k$ be a connected component of $f^{-k}(\Omega_0)$ containing $z_0$.
We may assume that $\Omega_1$ is bounded by at least two Jordan curves.
Then, each $\Omega_k$ is bounded by mutually disjoint finitely many smooth Jordan curves and 
we have
\begin{equation*}
	z_0\in \Omega_0\subset \overline{\Omega}_0\subset \Omega_1\subset \overline{\Omega_1}\dots \subset \overline{\Omega}_k\subset \Omega_{k+1}\subset \dots 
\end{equation*}
and
\begin{equation*}
	\mathcal F=\cup_{k=0}^{\infty}\Omega_k.
\end{equation*}

Since $f$ is hyperbolic, the Julia set $\mathcal J$ does not contain critical points.
Moreover, there exists a simply connected neighborhood $V_{z}$ of each $z\in \mathcal J$ such that $f|_{V_z}$ is injective on $V_z$ (Proposition \ref{Pro:hyperbolic}).
Hence, from compactness of $\mathcal J$ there exist disks $V_1, \dots , V_n$ for some $n\in \mathbb N$ such that $\mathcal J\subset \cup_{j=1}^{n}V_j$ and $f|_{V_j}$ is injective $(1\leq j\leq n)$.
Then, we show;
\begin{lemma}
	There exists $k_0\in \mathbb N$ such that for any $k\geq k_0$,  each connected component of $\Omega_{k+1}\setminus \overline{\Omega}_k$ is contained in some $V_j$ $(1\leq j\leq n)$.
\end{lemma}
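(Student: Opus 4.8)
The plan is to reduce the statement to two soft facts and a triangle inequality, using the hyperbolicity of $f$ only through the objects it has already produced (the exhaustion $\{\Omega_k\}$ and the finite cover $V_1,\dots,V_n$); no further dynamics should be needed. The two facts are: (i) the compact sets $F_k:=\widehat{\mathbb C}\setminus\Omega_k$ decrease to $\mathcal J$, and (ii) $\mathcal J$, being a Cantor set, is totally disconnected.

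From (i): since the $\Omega_k$ are open and increase to $\mathcal F$, the $F_k$ are compact, decrease, and $\bigcap_k F_k=\mathcal J$; hence for every open $O\supset\mathcal J$ one has $F_k\subset O$ for all large $k$ (the decreasing compacta $F_k\setminus O$ have empty intersection, so one of them is empty). I will fix a ``Lebesgue radius'' $\varepsilon_0>0$ for the cover: the map $z\mapsto\max_{1\le j\le n}\mathrm{dist}(z,\widehat{\mathbb C}\setminus V_j)$ is continuous and strictly positive on the compact set $\mathcal J$, so it has a minimum $2\varepsilon_0>0$, and then every Euclidean disk of radius $2\varepsilon_0$ centred at a point of $\mathcal J$ lies in some $V_j$. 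Applying the principle above to $O=\{z:\mathrm{dist}(z,\mathcal J)<\varepsilon_0\}$ produces $k_1$ with $\Omega_{k+1}\setminus\overline\Omega_k\subset F_k\subset O$ for $k\ge k_1$. Now if $W$ is a connected component of $\Omega_{k+1}\setminus\overline\Omega_k$ with $k\ge k_1$ and $\mathrm{diam}\,W<\varepsilon_0$, then picking $w_0\in W$ and $z_0\in\mathcal J$ with $|w_0-z_0|<\varepsilon_0$ gives $|w-z_0|<2\varepsilon_0$ for all $w\in W$, so $W$ lies in a disk $B(z_0,2\varepsilon_0)$, hence in some $V_j$. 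Thus the whole lemma follows once I show that $\sup\{\mathrm{diam}\,W\}\to 0$ as $k\to\infty$, the supremum over components $W$ of $\Omega_{k+1}\setminus\overline\Omega_k$.

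I would establish this last assertion by contradiction, and this is where (ii) enters. If it fails, there are $\delta_0>0$, indices $k_l\to\infty$, components $W_l$ of $\Omega_{k_l+1}\setminus\overline\Omega_{k_l}$, and points $z_l,w_l\in W_l$ with $|z_l-w_l|\ge\delta_0$. Since $z_l,w_l\in F_{k_l}$ and $F_{k_l}\searrow\mathcal J$, a subsequence has $z_l\to z_*$ and $w_l\to w_*$ with $z_*,w_*\in\mathcal J$ and $z_*\ne w_*$ (a limit point of $z_l$ cannot lie in any $\Omega_m$, as then $z_l\in\Omega_m\subset\Omega_{k_l}$ eventually). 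As $\mathcal J$ is a totally disconnected compactum, there is a partition $\mathcal J=A\sqcup B$ into nonempty sets that are relatively clopen, hence compact, with $z_*\in A$ and $w_*\in B$. Putting $3\eta=\mathrm{dist}(A,B)>0$, the set $O=\{z:\mathrm{dist}(z,A)<\eta\}\cup\{z:\mathrm{dist}(z,B)<\eta\}$ is an open neighbourhood of $\mathcal J$ that is the union of two disjoint open sets. For $l$ large, $W_l\subset F_{k_l}\subset O$, so the connected set $W_l$ lies entirely in one of the two halves; but $z_l$ eventually lies in the half near $A$ and $w_l$ in the half near $B$ — a contradiction. This proves $\mathrm{diam}\,W\to 0$, and taking $k_0$ larger than $k_1$ and than the threshold thus obtained proves the lemma.

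The step I expect to be the real content — and the one a referee will scrutinise — is the implication ``component in $\bigcup_j V_j$'' $\Rightarrow$ ``component in a single $V_j$'': a long, thin component could a priori wind through several of the disks $V_j$, and what rules this out is precisely that the components are pinched against the totally disconnected set $\mathcal J$ (this uses that $\mathcal J$ is a Cantor set, not merely compact). The remaining ingredients are routine manipulations with the decreasing exhaustion $\{F_k\}$ and the finiteness of the cover.
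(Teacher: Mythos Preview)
Your argument is correct and follows essentially the same route as the paper: both show that the components of $\Omega_{k+1}\setminus\overline{\Omega}_k$ must shrink to points of $\mathcal J$ (you phrase this as $\mathrm{diam}\,W\to 0$, the paper as $\Delta_{k_m}\to\{x\}$), and both derive this from the total disconnectedness of $\mathcal J$ together with $F_k\searrow\mathcal J$. Your use of a Lebesgue number and an explicit clopen separation of $\mathcal J$ makes the key step more transparent than the paper's version, but the underlying idea is the same.
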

\begin{proof}
	Since $f(\Omega_{k+1}\setminus\overline{\Omega_k})=\Omega_{k}\setminus\overline{\Omega_{k-1}}$ and $\Omega_{k+1}\supset\Omega_k$, we see that if every connected component of $\Omega_{k_0+1}\setminus\overline{\Omega_{k_0}}$ is contained in some $V_j$, then so is for $k\geq k_0$.
	Hence, we may find such a $k_0$ to show the statement of the lemma.
	
	Suppose that for any $k\in \mathbb N$, there exists a connected component $\Delta_k$ of $\Omega_{k+1}\setminus\Omega_k$ such that $\Delta_k$ is not contained in any $V_j$ $(j=1, 2, \dots , n)$.
	Thus, for sufficiently large $k$, $\Delta_k$ is contained in $\cup_{j=1}^{n}V_j$ but it is not contained in any $V_j$.
	By taking a subsequence if necessary, we may assume that $\Delta_k\cap V_j\not=\emptyset$ and $\Delta_k\cap V_{j'}\not=\emptyset$ for some $j, j' \in\{1, 2, \dots , n\}$.
	Let $x$ be an accumulation point of $\{\Delta_k\}_{k=1}^{\infty}$.
	Then, $x$ has to be in $\mathcal J$ because $\mathcal F=\cup_{k=1}^{\infty}\Omega_k$.
	
	The Julia set $\mathcal J$ is totally disconnected.
	Hence, if a sequence $\{x_{k_m}\}_{m=1}^{\infty}$ $(x_{k_m}\in \Delta_{k_m})$ converges to $x$, then $\{\Delta_{k_m}\}_{m=1}^{\infty}$ also converges to $\{x\}$.
	In other words, for any neighborhood $U$ of $x$, there exists $m_0\in \mathbb N$ such that for any $m\geq m_0$, $\Delta_{k_m}$ is contained in $U$.
	However, $x\in \mathcal J$ is in some $V_{j_0}$ because $\mathcal J\subset \cup_{j=1}^{n}V_j$.
	Therefore, $\Delta_{k_m}$ is contained in $V_{j_0}$ if $m$ is sufficiently large.
	Thus, we have a contradiction.	
\end{proof}

We take $k_0\in \mathbb N$ in the above lemma.
Let $\omega_1, \omega_2, \dots , \omega_{\ell}$ be the set of connected components of $\Omega_{k_0+1}\setminus\overline{\Omega_{k_0}}$.
Each $\omega_j$ is bounded by a finite number, say $L(j)+1$, of mutually disjoint simple closed curves.
We may assume that $L(j)>1$ $(j=1, 2, \dots \ell)$.
Note that the number of boundary components of $\partial\Omega_{k_0}\cap\partial\Omega_{k_0+1}$ is equal to $\ell$.
It is because $\partial (\Omega_{k_0+1}\setminus\overline{\Omega_{k_0}})$ consists of mutually disjoint simple closed curves in $\widehat{\mathbb C}$, and $\overline{\Omega_{k_0}}$ is compact.

\begin{figure}
	\centering
	\includegraphics[width=15cm]{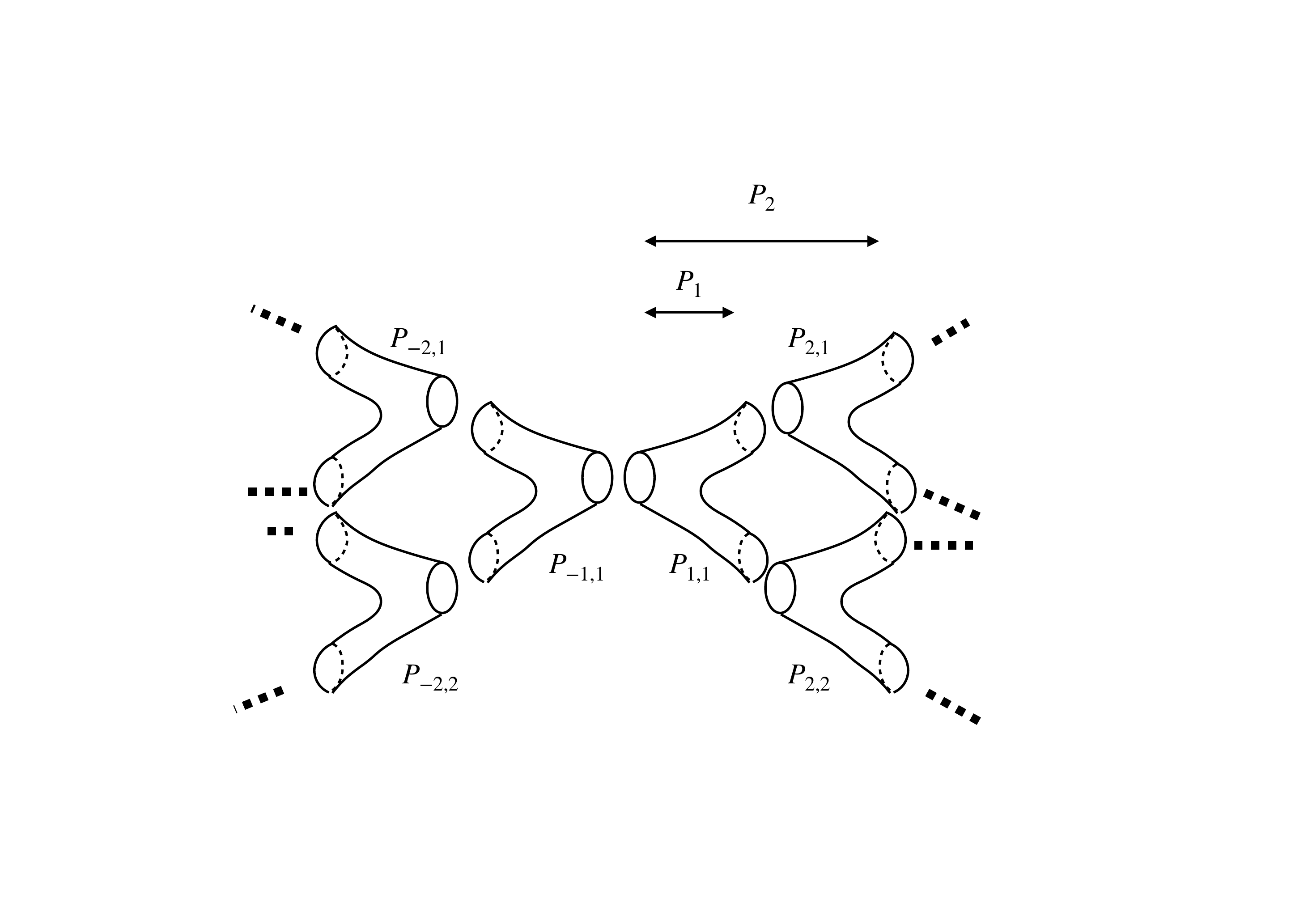}
	\caption{The middle one-third Cantor set}
	\label{Fig.Sstandard}
\end{figure}

For any $k>k_0$ and for a connected component $\Delta$ of $\Omega_{k+1}\setminus\overline{\Omega_k}$, we have $f^{k-k_0}(\Delta)\subset \Omega_{k_0+1}\setminus\overline{\Omega_{k_0}}$ and $f^{k-k_0}$ is conformal in $\Delta$ since $\Delta$ is contained in some $V_j$.
Hence, $\Delta$ is conformally equivalent to $\omega_{J}$ for some $J\in\{1, 2, \dots , \ell\}$.
Therefore, if $k>k_0$, then $\Omega_{k+1}\setminus\overline{\Omega_{k}}$ contains at most $\ell$ conformally different connected components.

Now, we consider the middle one-third Cantor set $\mathcal C$ and $X_{\mathcal C}:=\widehat{\mathbb C}\setminus\mathcal C$.
It is not hard to see that $X_{\mathcal C}$ admits a pants decomposition $\{P_{i,j}\}_{i\in \mathbb Z\setminus\{0\}, j\in \{1, \dots , 2^{|i|-1}\}}$ as in Figure \ref{Fig.Sstandard}.
In fact, we may take all $P_{i, j}$ so that they are conformally equivalent to each other.
Let $P_{N}$ $(N\in \mathbb N)$ be a subdomain of $X_{\mathcal C}$ consisting of $P_{i, j}$ for $i=1, \dots , N$ and $j=1, \dots , 2^{i-1}$.
We see that $P_N$ is bounded by $2^N+1$ mutually disjoint simple closed curves. 

Let $N_0\in \mathbb N$ be the largest number with $2^{N_0}+1\leq\ell$.
We put 
$$
K:=\overline{P_{N_0}\cup_{j=1}^{\ell_0}P_{N_0+1, j}},
$$
where $\ell_0=\ell -2^{N_0}-1$.
Then, $K$ is a compact subset of $X_{\mathcal C}$ bounded by $\ell$ simple closed curves. 
We denote by $C_1, \dots , C_{\ell}$, where $C_1\subset\partial P_{1, 1}$.
We may take a subdomain $G_1$ of $X_{\mathcal C}$ so that $G_1\setminus K$ is quasiconformally equivalent to $\Omega_{k_0+1}\setminus\overline{\Omega_{k_0}}$ as follows.

We take the largest number $L_1$ with $2^{L_1}\leq L(1)$.
Then, 
\begin{equation*}
	\overline{G_{1, 1}}:=\overline{\left (\cup_{i=1}^{L_1}\cup_{j=1, \dots , 2^{|i|}}P_{-i, j}\right )\cup \left (\cup_{j=1, \dots , L(1)-2^{L_1}}P_{-L_1-1, j}\right )}
\end{equation*}
is a closed subdomain of $X_{\mathcal C}$ with $L(1)+1$ boundary curves.
Hence, $G_{1, 1}$ is quasiconformally equivalent to $\omega_1$ since both of them are planar domains bounded by the same number of closed curves.

Similarly, we may construct subdomains $G_{1, 2}, \dots , G_{1, \ell}$ such that $\partial G_{1, j}\cap\partial K=C_j$ and each $G_{1, j}$ is quasiconformally equivalent to $\omega_{j}$ $(j=1, 2, \dots , \ell)$. 
Combining $K$ with $G_{1, 1}, \dots , G_{1, \ell}$, we obtain a desired subdomain $G_1$.

By using the same argument as above, we have a subdomain $G_2$ of $X_{\mathcal C}$ such that $G_1\subset G_2$ and $G_{2}\setminus\overline{G_1}$ is quasiconformally equivalent to $\Omega_{k_0+2}\setminus\overline{\Omega_{k_0+1}}$.
Moreover, we may use this argument inductively and we obtain a exhaustion $\{G_i\}_{i=1}^{\infty}$ of $X_{\mathcal C}$ such that
\begin{equation*}
	K\subset G_1 \subset G_2 \subset \dots \subset G_{i}\subset G_{i+1}\subset \dots , \quad X_{\mathcal C}=\cup_{i=1}^{\infty}G_i,
\end{equation*}
and 
$G_{i+1}\setminus \overline{G_{i}}$ are quasiconformally equivalent to $\Omega_{k_0+i+1}\setminus\overline{\Omega_{k_0+i}}$.

Now, we note the following.
\begin{Pro}
\label{Pro:Gluing}
	Let $R_1, R_2$ be Riemann surfaces. 
	We consider simple closed curves $\alpha_i$ in $R_i$ with $R_i\setminus{\alpha_i}=S^{(i)}_1\cup S^{(i)}_2$, where $S^{(i)}_1$ and $S^{(i)}_2$ are mutually disjoint subsurface of $R_i$ $(i=1, 2)$.
	Suppose that there exist quasiconformal mappings $f_{j} : S^{(1)}_{j}\to S^{(2)}_{j}$ $(j=1, 2)$ such that $f_1(\alpha_1)=f_2(\alpha_1)=\alpha_2$.
	Then, there exists a quasiconformal mapping $f : R_1\to R_2$. Moreover, the maximal dilatation of $f$ depends only on those of $f_1, f_2$ and the local behavior of those mappings near $\alpha_1$.
\end{Pro}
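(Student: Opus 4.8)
The plan is to keep $f_1$ unchanged on $\overline{S^{(1)}_1}$ and to replace $f_2$ by a quasiconformal map $\tilde f_2 : S^{(1)}_2 \to S^{(2)}_2$ that coincides with $f_2$ away from $\alpha_1$ but whose boundary values on $\alpha_1$ agree with those of $f_1$; then $f_1$ and $\tilde f_2$ glue to a homeomorphism of $R_1$ onto $R_2$. The only genuine obstruction is that, a priori, $f_1|_{\alpha_1}$ and $f_2|_{\alpha_1}$ are two different parametrizations of the circle $\alpha_2$, so the entire argument is devoted to matching them in a quantitatively controlled way.

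First I would analyze the transition map. Set $\phi := (f_2|_{\alpha_1})^{-1}\circ (f_1|_{\alpha_1})$, an orientation-preserving self-homeomorphism of $\alpha_1$. Fix once and for all a collar neighbourhood $A$ of $\alpha_1$ in $R_1$, say $A = A^{(1)} \cup \alpha_1 \cup A^{(2)}$ with $A^{(j)} \subset S^{(1)}_j$, together with a conformal identification of $A$ with a standard round annulus carrying $\alpha_1$ to the unit circle; quasisymmetry of self-maps of $\alpha_1$ is understood with respect to this coordinate. Since $f_j$ is quasiconformal on the one-sided collar $A^{(j)}\cup\alpha_1$ and extends homeomorphically to $\alpha_1$, its boundary correspondence on $\alpha_1$ is quasisymmetric, with constant controlled by the maximal dilatation of $f_j$ near $\alpha_1$ (the ``local behaviour'' in the statement) and by the modulus of $A^{(j)}$. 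Hence $\phi$ is quasisymmetric with constant depending only on the data in the statement.

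Next I would build the interpolation and glue. By the Beurling--Ahlfors extension, applied in logarithmic coordinates on the annulus, the quasisymmetric map $\phi$ extends to a quasiconformal self-homeomorphism $\Phi$ of the subcollar $A^{(2)}\cup\alpha_1$ with $\Phi|_{\alpha_1}=\phi$ and $\Phi$ equal to the identity on the inner boundary curve $\beta := \partial A^{(2)}\setminus \alpha_1$ (which we may take smooth), the maximal dilatation of $\Phi$ depending only on the quasisymmetry constant of $\phi$ and the modulus of $A^{(2)}$. Put $\tilde f_2 := f_2\circ\Phi$ on $A^{(2)}$ and $\tilde f_2 := f_2$ on $S^{(1)}_2\setminus\overline{A^{(2)}}$; the two definitions agree on $\beta$, and since $\beta$ is smooth the resulting $\tilde f_2 : S^{(1)}_2\to S^{(2)}_2$ is quasiconformal with $K(\tilde f_2)\le K(f_2)\,K(\Phi)$, while $\tilde f_2|_{\alpha_1}=(f_2|_{\alpha_1})\circ\phi=f_1|_{\alpha_1}$. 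Finally let $f := f_1$ on $\overline{S^{(1)}_1}$ and $f := \tilde f_2$ on $\overline{S^{(1)}_2}$; these agree on $\alpha_1$, so $f$ is a homeomorphism of $R_1$ onto $R_2$ that is quasiconformal on $R_1\setminus\alpha_1$, and since $\alpha_1$ is a rectifiable (indeed smooth, in every situation where we apply this proposition) Jordan curve, it is removable for quasiconformality in the sense of Lehto--Virtanen, so $f$ is quasiconformal on $R_1$ with $K(f)\le\max\{K(f_1),\,K(f_2)K(\Phi)\}$. Since $K(\Phi)$ depends only on $K(f_1)$, $K(f_2)$ and their local behaviour near $\alpha_1$, this is the asserted bound.

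I expect the main obstacle to be the quantitative interpolation: extracting from the bare hypothesis that $f_1,f_2$ are quasiconformal near $\alpha_1$ a quasisymmetry bound for the transition map $\phi$, and then producing the collar extension $\Phi$ whose dilatation is controlled solely by that bound and a fixed modulus — this is exactly the point at which the Beurling--Ahlfors machinery is needed. A secondary point requiring care is the removability of $\alpha_1$ in the final gluing, which is why one arranges the separating curves to be smooth (or at least rectifiable).
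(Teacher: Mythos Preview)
The paper does not actually prove this proposition; it is stated without proof (``Now, we note the following'') and then immediately applied to the domains $G_{i+1}\setminus\overline{G_i}$ and $\Omega_{k_0+i+1}\setminus\overline{\Omega_{k_0+i}}$. So there is nothing to compare your argument against. Your approach is the standard one for such gluing lemmas and is essentially correct: form the transition $\phi=(f_2|_{\alpha_1})^{-1}\circ(f_1|_{\alpha_1})$, deduce that $\phi$ is quasisymmetric from the boundary correspondence of quasiconformal maps on one-sided collars, extend $\phi$ across a subcollar so as to be the identity on the far boundary, compose to adjust $f_2$, and invoke removability of the smooth curve $\alpha_1$.

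One point deserves a little more care. The phrase ``Beurling--Ahlfors extension'' ordinarily refers to extending a quasisymmetric self-map of $\mathbb{R}$ (or $S^1$) to a quasiconformal self-map of the upper half-plane (or disk), with no control on a second boundary component. What you actually need is the \emph{interpolation} version: a quasisymmetric self-map $\phi$ of one boundary circle of a round annulus extends to a quasiconformal self-map of the annulus equal to the identity on the other boundary, with dilatation bounded in terms of the quasisymmetry constant of $\phi$ and the modulus of the annulus. This is standard---for instance, in $\log$-coordinates one can linearly interpolate in the angular variable between $\phi$ and the identity---and your parenthetical ``in logarithmic coordinates'' suggests you have exactly this in mind, but it is not literally Beurling--Ahlfors, so a sentence making the construction explicit would tighten the argument.
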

We may apply this proposition to domains $G_{i+1}\setminus\overline{G_i}$ and $\Omega_{k_0+i+1}\setminus\overline{\Omega_{k_0+i}}$ $(i=1, 2, \dots )$.
Noting that there only finitely many conformal equivalence classes in those domains, we verify that $X_{\mathcal C}\setminus K=\cup_{i\in \mathbb N}(G_{i+1}\setminus\overline{G_i})$ and $\mathcal F\setminus \overline{\Omega_{k_0+1}}=\cup_{i\in \mathbb N}(\Omega_{k_0+i+1}\setminus\overline{\Omega_{k_0+i}})$ are quasiconformally equivalent.
\qed

\section{Proof of Theorem \ref{Thm:Main2}}
{\it Proof of (1).}
We divide the proof into several steps.

\medskip
\noindent
{\bf Step 1 : Analyzing random Cantor sets.}
Let $\omega=(q_n)_{n=1}^{\infty}$ and $\widetilde{\omega}=(\tilde{q}_n)_{n=1}^{\infty}$ be sequences with $\delta$-lower bound.
We take $E_{k}=\cup_{i=1}^{2^k}I_k^{i}$ and $\widetilde E_{k}=\cup_{i=1}^{2^k}\widetilde I_k^{i}$ as in \S \ref{sebsec.Random Cantor sets} for $\omega$ and $\widetilde\omega$, respectively.
In fact, $I_{k}^{i}$ (resp. $\widetilde I_{k}^i$) is located at the left of $I_k^{i+1}$ (resp. $\widetilde I_{k}^{i+1}$) for $i=1, 2, \dots , 2^k-1$.
The set $[0, 1]\setminus E_k$ (resp. $[0, 1]\setminus \widetilde E_k$) consists of $2^k-1$ open intervals $J_k^1, \dots , J_k^{2^k-1}$ (resp. $\widetilde J_k^1, \dots , \widetilde J_k^{2^k-1}$).
Each $J_k^i$ (resp. $\widetilde J_k^i$) is located between $I_k^i$ and $I_k^{i+1}$ (resp. $\widetilde I_k^i$ and $\widetilde I_k^{i+1}$).
 
Because of the construction, we have
\begin{equation*}
	|I_{k+1}^i|=\frac{1}{2}(1-q_k)|I_k^i|.
\end{equation*}
Therefore, we have
\begin{equation}
	|I_k^i|=2^{-k}\prod_{j=1}^{k}(1-q_j).
\end{equation}
Next, we estimate the length of $J_k^{i}$.

In construction $E_{k+1}$ from $E_k$, we obtain open intervals $I_{k+1}^{2i-1}$, $I_{k+1}^{2i}$ and the closed interval $J_{k+1}^{2i-1}$ such that $I_k^{i}=I_{k+1}^{2i-1}\cup J_{k+1}^{2i-1}\cup I_{k+1}^{2i}$ for each $i, k$ (Figure \ref{Fig.Interval}).

\begin{figure}
	\centering
	\includegraphics[width=14cm]{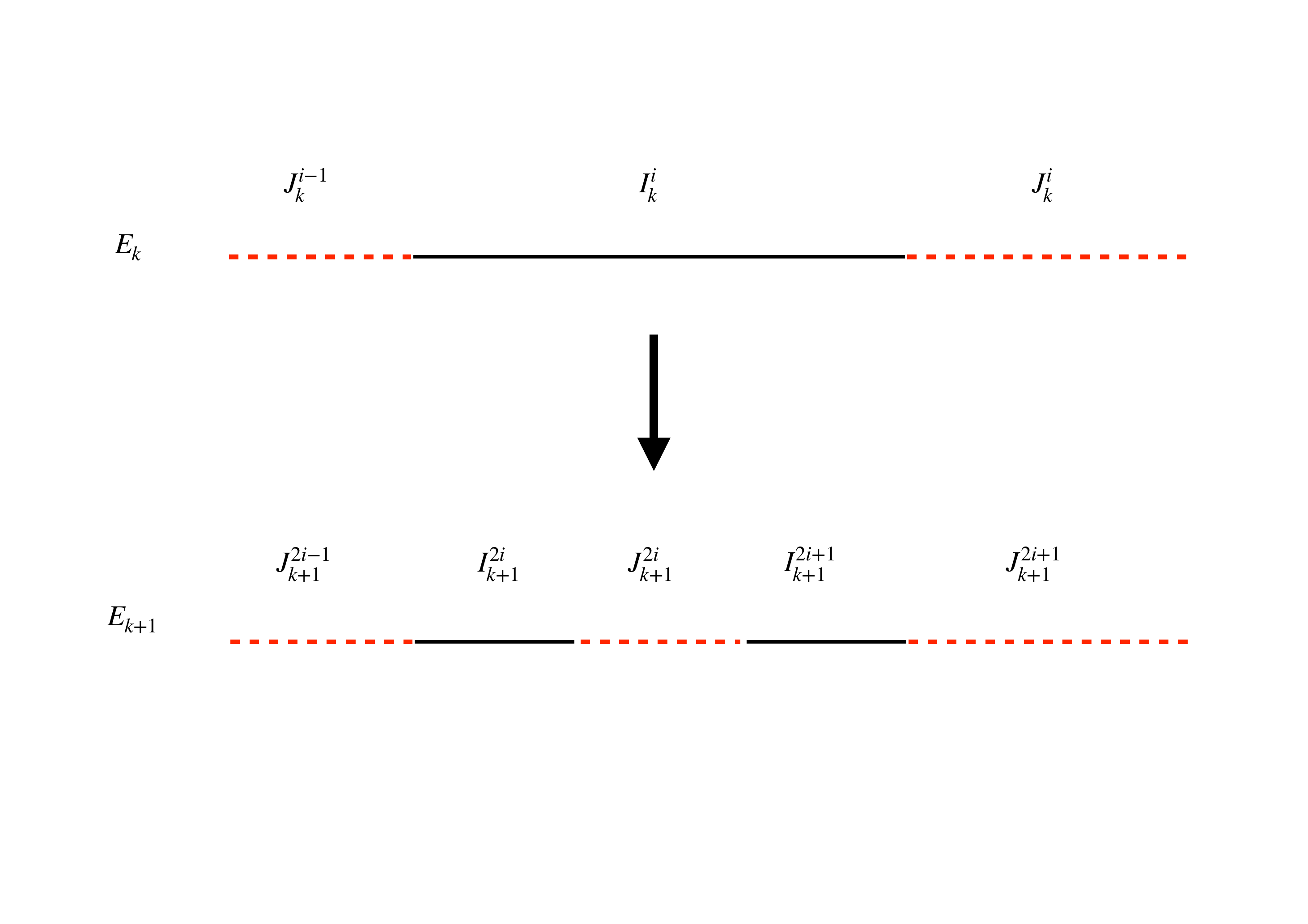}
	\caption{}
	\label{Fig.Interval}
\end{figure}

If $i$ is odd, we have
\begin{equation}
\label{eqn:J-length1}
	|J_{k+1}^{i}|=|I_{k}^{i}|q_{k+1}=\frac{2q_{k+1}}{1-q_{k+1}} |I_{k+1}^1|\geq 2\delta |I_{k+1}^1|,
\end{equation}
as $q_{k+1}\geq \delta$.

If $i$ is even, then $i=2^{\ell}m$ for an integer $\ell$ with $1\leq \ell\leq k$ and an odd number $m$.
Since $J_{k+1}^{i}$ is located between $I_{k+1}^{i}$ and $I_{k+1}^{i+1}$, we see that $J_{k+1}^{i}=J_{k}^{i/2}=J_{k}^{2^{\ell -1}m}$.
Repeating this argument, we have
$J_{k+1}^{i}=J_{k-\ell+1}^{m}$.
Since $m$ is odd, we conclude from (\ref{eqn:J-length1}) that
\begin{eqnarray}
\label{eqn:J-even1}
	|J_{k+1}^{i}|&=&|J_{k-\ell+1}^{m}|=2^{-k+\ell}q_{k-\ell +1}\prod_{j=1}^{k-\ell}(1-q_j) \nonumber  \\ &\geq & 2^{-k+1}\delta \prod_{j=1}^{k}(1-q_j) 
	\geq 4\delta |I_{k+1}^1|
\end{eqnarray}
as $q_{k-\ell +1}\geq\delta$.

Thus, we obtain the following from (\ref{eqn:J-length1}) and (\ref{eqn:J-even1}).
\begin{lemma}
\label{Lemma}
	Let $I_k^i$ and $J_{k+1}^i$ be the same ones as above for a sequence $\omega=(q_n)_{n=1}^{\infty}$ with $\delta$-lower bound.
	Then,
	\begin{equation}
	\label{eqn:J}
		|J_{k+1}^{i}|\geq 2\delta |I_{k+1}^1|
	\end{equation}
	hold for $i=1, 2, \dots , 2^{k+1}-1$.
\end{lemma}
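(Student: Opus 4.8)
The plan is to establish the two displayed estimates (\ref{eqn:J-length1}) and (\ref{eqn:J-even1}) directly from the self-similar construction of the sets $E_k$, treating the gaps $J_{k+1}^i$ according to the parity of $i$, and then to read off the uniform bound (\ref{eqn:J}).

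First I would isolate the single exact computation everything rests on. Since every closed interval of $E_m$ has the same length $|I_m^j| = 2^{-m}\prod_{\nu=1}^m (1-q_\nu)$, and since passing from $E_m$ to $E_{m+1}$ splits $I_m^j$ as $I_{m+1}^{2j-1}\cup J_{m+1}^{2j-1}\cup I_{m+1}^{2j}$ with $|J_{m+1}^{2j-1}| = q_{m+1}|I_m^j|$ and $|I_{m+1}^{2j}| = \frac{1}{2}(1-q_{m+1})|I_m^j|$, I get both the length of a ``newly created'' gap and the identity $|I_m^j| = \frac{2}{1-q_{m+1}}\,|I_{m+1}^1|$. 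For odd $i$, the gap $J_{k+1}^i$ is exactly such a new gap (take $j=(i+1)/2$), so $|J_{k+1}^i| = q_{k+1}|I_k^{(i+1)/2}| = \frac{2q_{k+1}}{1-q_{k+1}}\,|I_{k+1}^1| \ge 2\delta\,|I_{k+1}^1|$, using $q_{k+1}\ge\delta$ and $1-q_{k+1}\le 1$; this is (\ref{eqn:J-length1}).

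For even $i$ the point is that a gap with even index is \emph{inherited} from the previous level: $I_{k+1}^{2j}$ is the right half of $I_k^{j}$ and $I_{k+1}^{2j+1}$ is the left half of $I_k^{j+1}$, so the gap $J_{k+1}^{2j}$ between them coincides with the gap $J_k^{j}$ between $I_k^{j}$ and $I_k^{j+1}$. Writing $i=2^{\ell}m$ with $m$ odd and $1\le\ell\le k$ and iterating this identity $\ell$ times gives $J_{k+1}^{i}=J_{k-\ell+1}^{m}$ with \emph{odd} upper index; applying the odd-index formula at level $k-\ell+1$ yields $|J_{k+1}^i| = 2^{-(k-\ell)}q_{k-\ell+1}\prod_{\nu=1}^{k-\ell}(1-q_\nu)$. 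Comparing with $|I_{k+1}^1| = 2^{-(k+1)}\prod_{\nu=1}^{k+1}(1-q_\nu)$ and using $q_{k-\ell+1}\ge\delta$, $\prod_{\nu=1}^{k-\ell}(1-q_\nu)\ge\prod_{\nu=1}^{k+1}(1-q_\nu)$, and $2^{-(k-\ell)}\ge 2^{1-k}$ (as $\ell\ge 1$), I obtain $|J_{k+1}^i|\ge 2^{1-k}\delta\prod_{\nu=1}^{k+1}(1-q_\nu) = 4\delta\,|I_{k+1}^1|$, which is (\ref{eqn:J-even1}). Combining the two cases gives (\ref{eqn:J}).

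The length arithmetic is routine; the one spot needing care, and the main potential pitfall, is the even-index bookkeeping: one must check that the $2$-adic valuation $\ell$ of $i$ satisfies $1\le\ell\le k$ for every even $i\in\{1,\dots,2^{k+1}-1\}$, so that the descent terminates at a genuine level $k-\ell+1\ge 1$ with odd index $m$, and that the ``left half / right half'' labelling is assigned consistently so that $J_{k+1}^{2j}=J_k^{j}$ really holds for all admissible $j$. Once the indexing is pinned down the rest is a direct estimate; it is worth noting that the even case in fact yields the stronger bound $4\delta\,|I_{k+1}^1|$, so the constant $2\delta$ in (\ref{eqn:J}) is not sharp.
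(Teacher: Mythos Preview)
Your proposal is correct and follows essentially the same route as the paper: split according to the parity of $i$, handle odd $i$ directly from $|J_{k+1}^i|=q_{k+1}|I_k^{(i+1)/2}|$, and for even $i=2^{\ell}m$ iterate $J_{k+1}^{2j}=J_k^{j}$ down to an odd index at level $k-\ell+1$, then compare with $|I_{k+1}^1|$. The only cosmetic difference is that the paper bounds below by $2^{-k+1}\delta\prod_{j=1}^{k}(1-q_j)$ before passing to $4\delta|I_{k+1}^1|$, whereas you go straight to $\prod_{\nu=1}^{k+1}(1-q_\nu)$; both are valid.
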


\medskip
\noindent
{\bf Step 2: Constructing a pants decomposition.}
We draw a circle $C_k^i$ centered at the midpoint of $I_{k}^i$ with radius $\frac{1}{2}(1+\delta)|I_{k}^1|$ for each $k\in \mathbb N$ and $1\leq i\leq 2^k$.
From (\ref{eqn:J}), we see that $C_k^i\cap C_k^j=\emptyset$ if $i\not=j$.
Since
\begin{equation*}
	\frac{1}{2}\cdot \delta|I_{k+1}^1|<\frac{1}{2}\cdot\delta|I_{k}^1|,
\end{equation*}
we also see that $C_{k+1}^i\cap C_k^j=\emptyset$.
Therefore, $\cup_{k=1}^{\infty}\cup_{i=1}^{2^k}C_k^i$ gives a pants decomposition for $X_{E(\omega)}$.

We draw circles $\widetilde{C}_k^i$ for $\widetilde\omega$ by the same way. Then, we also see that $\cup_{k=1}^{\infty}\cup_{i=1}^{2^k}\widetilde C_k^i$ gives a pants decomposition for $X_{E(\widetilde\omega)}$.

\medskip
\noindent
{\bf Step 3: Analyzing a pair of pants.} We denote by $P_k^i$ a pair of pants bounded by $C_k^i, C_{k+1}^{2i-1}$ and $C_{k+1}^{2i}$.
We consider the complex structure of $P_k^i$ so that we may assume that the center of $C_k^i$ is the origin with radius $\frac{1}{2}(1+\delta)|I_{k}^1|$.
Then, the centers of $C_{k+1}^{2i-1}$ and $C_{k+1}^{2i}$ are
\begin{equation*}
	-\frac{1}{2}q_{k+1}|I_{k}^1|-\frac{1}{4}\left (1+\delta\right )(1-q_{k+1})|I_{k}^1|
\end{equation*}
and
\begin{equation*}
	\frac{1}{2}q_{k+1}|I_{k}^1|+\frac{1}{4}\left (1+\delta\right )(1-q_{k+1})|I_{k}^1|,
\end{equation*}
respectively.

By applying an affine map $z\mapsto \alpha z+\beta$ for some $\alpha >0, \beta\in \mathbb R$ to $P_k^i$ so that the circle $C_k^i$ is sent a circle centered at the origin with radius $1+\delta$. 
We denote the circle by $C_{k,1}$. 
Then, the circle $C_{k+1}^{2i-1}$ is sent a circle $C_{k, 2}$ centered at
\begin{equation*}
	-x_k :=-q_{k+1}-\frac{1}{2}\left (1+\delta\right )(1-q_{k+1}) =-\frac{1}{2}\left\{\left (1+\delta\right )+\left (1-\delta\right )q_{k+1}\right\}
\end{equation*}
with radius
\begin{equation*}
	r_k :=\frac{1}{2}\left (1+\delta\right )(1-q_{k+1})
\end{equation*}
and $C_{k+1}^{2i}$ is sent a circle $C_{k, 3}$ centered at $x_k$ with radius $r_k$.
We may conformally identify $P_k^i$ with a pair of pants $\mathcal P_k$
bounded by $C_{k, 1}, C_{k, 2}$ and $C_{k, 3}$.

Similarly, we consider a pair of pants $\widetilde{P}_k^i$ bounded by $\widetilde C_k^i, \widetilde  C_{k+1}^{2i-1}$ and $\widetilde C_{k+1}^{2i}$, and apply an affine map to the pair of pants $\widetilde P_{k}^i$ so that the circle $\widetilde C_k^i$ is mapped a circle centered at the origin with radius $1+\delta$, which is the same circle as the image of $C_k^i$ above.
We denote by $\widetilde{C}_{k, i}$ the image of $\widetilde{C}_k^i$ $(i=1, 2, 3)$.  
We may conformally identify $\widetilde{P}_k^i$ with a pair of pants $\widetilde{\mathcal P}_k$ bounded by $\widetilde C_{k, 1}, \widetilde C_{k, 2}$ and $\widetilde C_{k, 3}$, where 
$\widetilde C_{k,1}$ is the same circle as $C_{k,1}$, $\widetilde C_{k,2}$ is centered at
\begin{equation*}
	-\widetilde x_k :=-\frac{1}{2}\left\{\left (1+\delta\right )+\left (1-\delta\right )\widetilde q_{k+1}\right\}
\end{equation*}
wirh radius
\begin{equation*}
	\widetilde r_k :=\frac{1}{2}\left (1+\delta\right )(1-\widetilde q_{k+1})
\end{equation*}
and $\widetilde C_{k, 3}$ is centered at $\widetilde{x}_k$ with radius $\widetilde{r}_k$.

\medskip
\noindent
{\bf Step 4 : Constructing intermediate pairs of pants.}
By applying $z\mapsto (x_k/\widetilde x_k)z$ to $\widetilde{\mathcal P}_k$, we obtain a pair of pants $\widehat{P}_k$.
The pair of pants $\widehat{P}_k$ is bounded by $\widehat C_{k, 1}, \widehat C_{k, 2}$ and $\widehat C_{k, 3}$. Each $\widehat C_{k, i}$ is corresponding to $\widetilde{C}_{k, i}$ $(i=1, 2, 3)$.
Note that for each $i$, the center of $\widehat C_{k, i}$ is $x_k$, the same as that of $C_{k, i}$, and $\widehat{P}_k$ is conformally equivalent to $\widetilde{\mathcal P}_k$.
The radius of $\widehat{C}_{k, 1}$ is
\begin{equation*}
	\left (1+\delta\right )\cdot\frac{x_k}{\widetilde x_k}=\left (1+\delta\right )\frac{\left (1+\delta\right )+\left (1-\delta\right )q_{k+1}}{\left (1+\delta\right )+\left (1-\delta\right )\widetilde q_{k+1}},
\end{equation*}
and the radius of $\widehat C_{k, 2}, \widehat C_{k, 3}$ is
\begin{equation*}
	\widehat r_k :=\frac{1}{2}\left (1+\delta\right )(1-\widetilde q_{k+1})\left (1+\delta\right )\frac{\left (1+\delta\right )+\left (1-\delta\right )q_{k+1}}{\left (1+\delta\right )+\left (1-\delta\right )\widetilde q_{k+1}}.
\end{equation*}

Now, we take an intermediate pair of pants $P_k'$ bounded by $\widehat C_{k, 1}, C_{k, 2}$ and $C_{k, 3}$.

\medskip
\noindent
{\bf Step 5 : Making quasiconformal mappings, I.}
In the following the argument, we use a notation $d(\varphi)$ for a quasiconformal mapping $\varphi$ as
\begin{equation*}
	d(\varphi)=\log K(\varphi),
\end{equation*}
where $K(\varphi)$ is the maximal dilatation of $\varphi$.

We suppose that $q_{k+1}\geq\widetilde{q}_{k+1}$. Then, we have
\begin{equation*}
	\widehat{r}_k\geq r_k =\frac{1}{2}\left (1+\delta\right )(1-q_{k+1}).
\end{equation*}
In other words, the radius of $\widehat C_{k, 2}$, $\widehat C_{k, 3}$ is not smaller than that of $C_{k, 2}$, $C_{k, 3}$.

Let $C_{k, +}$ be a circle centered at $x_k$ with radius
\begin{equation*}
	\widetilde{R}_k :=\left (1+\delta\right )\frac{x_k}{\widetilde x_k}-x_k,
\end{equation*}
so that $C_{k, +}$ is tangent with $\widehat C_{k, 1}$.

We consider two circular annuli $A_{k, +}$ bounded by $C_{k, +}$ and $\widehat C_{k, 3}$, $A_{k, +}'$ bounded by $C_{k, +}$ and $C_{k, 3}$.
Here, we use the following well-known fact.
\begin{lemma}
\label{lemma:modulus}
	For annuli $A_i =\{0<r_i<|z|<R_i<\infty\}$ ($i=1, 2$), there exists a quasiconformal mapping $\varphi : A_1\to A_2$ such that
	\begin{eqnarray*}
		\varphi (r_1e^{i\theta})=r_2e^{i\theta} \\
		\varphi (R_1E^{i\theta})=R_2e^{i\theta}
	\end{eqnarray*}
	and
	\begin{equation*}
		K(\varphi)=e^{d(\varphi)},
	\end{equation*}
	where 
	\begin{equation*}
		d(\varphi)=\left |\log \frac{\log R_1 -\log r_1}{\log R_2-\log r_2}\right |.
	\end{equation*}
\end{lemma}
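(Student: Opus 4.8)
\emph{Proof proposal.} The plan is to pass to logarithmic coordinates, in which both round annuli straighten into horizontal strips and the map we want becomes a purely radial affine stretch. I would first set $L_i := \log R_i - \log r_i > 0$ for $i = 1, 2$ and consider the strips $S_i := \{ w \in \mathbb{C} : \log r_i < \Re w < \log R_i \}$, observing that $z \mapsto \log z$ presents $A_i$ as $S_i$ modulo the translation $w \mapsto w + 2\pi i$. On $S_1$ I would take the $\mathbb{R}$-affine map
\begin{equation*}
	\Phi(u + iv) := \frac{L_2}{L_1}\,(u - \log r_1) + \log r_2 + i v ,
\end{equation*}
which carries $S_1$ onto $S_2$, sends the line $\Re w = \log r_1$ to $\Re w = \log r_2$ and the line $\Re w = \log R_1$ to $\Re w = \log R_2$, and --- crucially --- satisfies $\Phi(w + 2\pi i) = \Phi(w) + 2\pi i$. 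Consequently $\varphi(z) := \exp\bigl(\Phi(\log z)\bigr)$ will be well defined on $A_1$ independently of the chosen branch of $\log$, will be a homeomorphism of $A_1$ onto $A_2$, and on the boundary circles will act by $r_1 e^{i\theta} \mapsto r_2 e^{i\theta}$ and $R_1 e^{i\theta} \mapsto R_2 e^{i\theta}$, exactly as demanded.

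The remaining task is to read off the maximal dilatation. Since $\log$ and $\exp$ are conformal, $K(\varphi)$ will equal the constant maximal dilatation of $\Phi$; writing $\Phi$ in real coordinates, its differential is the diagonal matrix $\mathrm{diag}(L_2/L_1,\,1)$, whose maximal dilatation is $\max\{L_2/L_1,\, L_1/L_2\} = \exp\bigl|\log(L_1/L_2)\bigr|$. Substituting the definitions of the $L_i$, this is precisely $\exp\bigl|\log\frac{\log R_1 - \log r_1}{\log R_2 - \log r_2}\bigr| = e^{d(\varphi)}$, as claimed.

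I do not anticipate a genuine obstacle here; the two points that warrant a moment's care are the equivariance $\Phi(w + 2\pi i) = \Phi(w) + 2\pi i$ --- which is exactly why $\Phi$ must stretch only in the radial direction, and is what lets $\exp \circ \Phi \circ \log$ descend to a single-valued map of $A_1$ --- and the elementary computation that an $\mathbb{R}$-linear map of the form $u + iv \mapsto \lambda u + iv$ with $\lambda > 0$ has maximal dilatation $\max\{\lambda, 1/\lambda\}$. Both being routine, the lemma follows.
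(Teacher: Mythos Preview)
Your argument is correct and is the standard construction: the radial stretch $\varphi(\rho e^{i\theta}) = r_2 (\rho/r_1)^{L_2/L_1} e^{i\theta}$, written via the logarithm/strip model, with the dilatation computed from the diagonal linear map. The paper does not actually supply a proof of this lemma; it is quoted as a ``well-known fact'' and used as a black box, so there is nothing to compare against beyond noting that your derivation is exactly the classical one.
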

 
It follows from Lemma \ref{lemma:modulus} that there exists a quasiconformal mapping $\varphi_{k, +} : A_{k, +}\to A_{k, +}'$ such that
\begin{equation*}
	d(\varphi_{k, +})=\log \frac{\log \widetilde{R}_k - \log  r_k}{\log \widetilde R_k -\log \widehat r_k},
\end{equation*}
\begin{equation}
\label{eqn:bdybehavior}
	\varphi_{k, +}(z)=z,
\end{equation}
for any $z\in C_{k, +}$
and
\begin{equation}
\label{eqn:arg}
	\arg (\varphi_{k, +} (z)-x_k)=\arg (z-x_k)
\end{equation}
for $z\in \widehat{C}_{k, 3}$.

Since
\begin{equation*}
	\log \frac{c-a}{c-b}=\log \left (1 + \frac{b-a}{c-b} \right )\leq \frac{b-a}{c-b}
\end{equation*}
for $0<a\leq b<c$, we obtain
\begin{equation}
\label{eqn:d+}
	d(\varphi_{k,+})\leq \frac{\log \widehat {r}_k -\log r_k}{\log \widetilde R_k  -\log \widehat r_k}.
\end{equation}
Moreover, we have
\begin{eqnarray}
\label{eqn:d+1}
	\log \widetilde R_k -\log \widehat r_k &=&\log \frac{(1+\delta)-(1-\delta)\widetilde q_{k+1}}{(1+\delta)-(1+\delta)\widetilde q_{k+1}} \\
	&\geq &\log \frac{(1+\delta)-(1-\delta)
	\delta}{(1+\delta)-(1+\delta)\delta } >0, \nonumber
\end{eqnarray}
and
\begin{eqnarray}
\label{eqn:d+2}
	\log \widehat r_k - \log r_k 
	=\log \frac{1-\widetilde q_{k+1}}{1-q_{k+1}}+\log \frac{(1+\delta)+(1-\delta)q_{k+1}}{(1+\delta)+(1-\delta)\widetilde q_{k+1}}.
\end{eqnarray}

We also see that
\begin{eqnarray}
\label{eqn:d+3}
	&\log \frac{(1+\delta)+(1-\delta)q_{k+1}}{(1+\delta)+(1-\delta)\widetilde q_{k+1}} \\
	&=\log \left \{ 1+ \frac{(1-\delta)(q_{k+1}-\widetilde q_{k+1})}{(1+\delta)+(1-\delta)\widetilde q_{k+1}} \right \}\nonumber 
	\leq  \frac{(1-\delta)(q_{k+1}-\widetilde q_{k+1})}{(1+\delta)+(1-\delta)\widetilde q_{k+1}}\\
	&\leq q_{k+1}-\widetilde q_{k+1}, \nonumber
\end{eqnarray}
because
\begin{equation*}
	\left (1+\delta\right )+\left (1-\delta\right )\widetilde q_{k+1}> 1-\delta >0.
\end{equation*}
From (\ref{eqn:d+})--(\ref{eqn:d+3}), we obtain
\begin{eqnarray}
\label{eqn:qcestimate1}
	&d(\varphi_{k, +})\leq \left (\log \frac{(1+\delta)-(1-\delta)
	\delta}{(1+\delta)-(1+\delta)\delta } \right )^{-1} \\
	&\times \left \{\log \frac{1-\widetilde{q}_{k+1}}{1-q_{k+1}}+(q_{k+1}-\widetilde{q}_{k+1}) \right \}\leq C(\delta)d(\omega, \widetilde \omega) 
	\nonumber
\end{eqnarray}
for some constant $C(\delta)>0$ depending only on $\delta$.

We may do the same operation, symmetrically; 
we take a circle $C_{k, -}$ centered at $-x_k$ of radius $\widetilde R_k$ and consider two annuli $A_{k, -}$ and $A_{k, -}'$.
The annulus $A_{k, -}$ is bounded by $C_{k, -}$ and $\widehat C_{k, 2}$, and $A_{k, -}'$ is bounded by $C_{k, -}$ and $C_{k, 2}$.
Then, we obtain a quasiconformal mapping $\varphi_{k, -} : A_{k, -}\to A_{k, -}'$ such that
\begin{equation}
\label{eqn:bdybwehavior1}
	\varphi_{k, -}(z)=z
\end{equation}
for $z\in C_{k, -}$ and
\begin{equation}
\label{eqn:arg1}
	\arg (\varphi_{k, -}(z)+x_k)=\arg (z+x_k).
\end{equation}
for $z\in \widehat{C}_{k, 2}$. Moreover, the mapping satisfies an inequality,
\begin{equation}
	d(\varphi_{k, -})\leq C(\delta)d(\omega, \widetilde \omega).
\end{equation}

We define a homeomorphism $\varphi_k : \widehat{P}_k\to P_k'$ by
\begin{equation*}
	\varphi_k (z)=
	\begin{cases}
		\varphi_{k, +}(z), \quad z\in A_{k, +} \\
		\varphi _{k, -}(z), \quad z\in A_{k, -} \\
		z, \quad \textrm{otherwise}.
	\end{cases}
\end{equation*}
The homeomorhpism $\varphi_k$ is quasiconformal except circles $C_{k, +}, C_{k, -}$. Hence, it has to be quasiconformal on $\widehat P_k$ with
\begin{equation}
	d(\varphi_k)\leq C(\delta)d(\omega, \widetilde \omega).
\end{equation}

\medskip
\noindent
{\bf Step 6 : Making quasiconformal mappings, II.}
In this step, we make a quasiconformal mapping from $P_k'$ to $\mathcal P_k$.
Recall that $P_k'$ is a pair of pants bounded by $\widehat C_{k, 1}$, $C_{k, 2}$ and $C_{k, 3}$, and $\mathcal P_k$ is bounded by $C_{k, 1}$, $C_{k, 2}$ and $C_{k, 3}$.

Let $C_{k, 0}$ be a circle centered at the origin of radius $x_k+r_k$, so that $C_{k, 0}$ is tangent with $C_{k, 2}$, $C_{k, 3}$. We consider circular annuli $B_k'$ bounded by $C_{k, 0}$ and $\widehat C_{k, 1}$, and $B_k$ bounded by $C_{k, 0}$ and $C_{k, 1}$.
It follows from Lemma \ref{lemma:modulus} that there exists a quasiconformal mapping $\psi_{k, 0} : B_k'\to B_k$ such that
\begin{equation*}
	d(\psi_{k, 0})=\log \frac{\log (1+\delta)\frac{x_k}{\widetilde x_k}-\log (x_k+r_k)}{\log (1+\delta)-\log (x_k+r_k)}
\end{equation*}
and $\psi_{k, 0}|_{C_0}$ is the identity.

As in Step 5, we have
\begin{equation*}
	d(\psi_{k, 0})\leq \frac{\log x_k-\log \widetilde x_k}{\log (1+\delta)-\log (x_k+r_k)}.
\end{equation*}

Now, we see that
\begin{eqnarray}
\label{eqn:d_}
	\log \left (1+\delta\right )-\log (x_k+r_k)&=&\log\frac{1+{\delta
	}}{1+\delta(1-q_{k+1})} \\
	&\geq &\log\frac{1+\delta }{1+{\delta}(1-\delta)}> 0, \nonumber
\end{eqnarray}
and
\begin{eqnarray}
\label{eqn:d_1}
	\log x_k -\log \widetilde x_k&=&\log\left (1+(1-\delta)\frac{q_{k+1}-\widetilde{q}_{k+1}}{(1+\delta)+(1-\delta)\widetilde{q}_{k+1}}\right ) \\
	&\leq & \left (1-\delta\right )\frac{q_{k+1}-\widetilde{q}_{k+1}}{(1+\delta)+(1-\delta)\widetilde{q}_{k+1}} \nonumber \\
	&\leq & q_{k+1}-\widetilde{q}_{k+1}. \nonumber
\end{eqnarray}
From (\ref{eqn:d_}) and (\ref{eqn:d_1}), we have
\begin{equation}
\label{eqn:qcestimate2}
	d(\psi_{k, 0})\leq \left ( \log\frac{1+\delta }{1+{\delta}(1-\delta)} \right )^{-1}(q_{k+1}-\widetilde{q}_{k+1}).
\end{equation}

We define a homeomorphism $\psi_k : P_k'\to \mathcal{P}_k$ by
\begin{equation*}
	\psi_k (z)=
	\begin{cases}
		\psi_{k, 0}(z), \quad z\in B_k' \\
		z, \quad \textrm{otherwise}.
	\end{cases}
\end{equation*}
Then, as in Step 5, we see that $\psi_k$ is quasiconformal on $P_k'$ with
\begin{equation}
	d(\psi_k)\leq C(\delta)d(\omega, \widetilde \omega).
\end{equation}

In the case where $q_{k+1}\leq \widetilde{q}_{k+1}$, the same argument still works in Steps 5 and 6; we obtain the same results.

\medskip
\noindent
{\bf Step 7 : Making a global quasiconformal mapping.}
In Steps 5 and 6, we have made quasiconformal mappings $\varphi_k :\widehat P_k\to P_k'$ and $\psi_k : P_k'\to \mathcal P_k$.
Thus, $\Phi_k:= \psi_k\circ\varphi_k : \widehat P_k\to \mathcal P_k$ gives a quasiconformal mapping with
\begin{equation*}
	d(\Phi_k)\leq C(\delta)d(\omega, \widetilde \omega)
\end{equation*}
for each $k\in \mathbb N$.

Because of the boundary behaviors (\ref{eqn:bdybehavior}), (\ref{eqn:arg}), (\ref{eqn:bdybwehavior1}) and (\ref{eqn:arg1}), we see that those mappings give a quasiconformal mapping $\Phi$ from $X_{E(\omega)}$ onto $X_{E(\widetilde{\omega})}$ with
\begin{equation*}
	d(\Phi)\leq C(\delta)d(\omega, \widetilde \omega).
\end{equation*}
Furthermore, from our construction of the mapping, we see that $\Phi (\mathbb H)=\mathbb H$.
Therefore, $\Phi$ is extended to a quasiconformal self-mapping of $\widehat{\mathbb C}$ as desired.
\qed

\medskip
{\it Proof of (2).} Take any $\varepsilon >0$.
Since, $\log \frac{1-\widetilde{q}_n}{1-q_n}\to 0$ as $n\to \infty$, we also see that $q_n\to \widetilde{q}_n\to 0$.
Viewing (\ref{eqn:qcestimate1}) and (\ref{eqn:qcestimate2}), we verify that there exists an $N\in \mathbb N$ such that 
\begin{eqnarray*}
	d(\varphi_k)<\frac{1}{2}\log (1+\varepsilon) \quad \textrm{and}\quad	d(\psi_k)<\frac{1}{2}\log (1+\varepsilon),
\end{eqnarray*}
if $k>N$. Hence, if $k>N$, then
\begin{equation}
\label{eqn:asymtotic}
	d(\Phi_k)=d(\psi_k\circ\phi_k)\leq d(\psi_k)+d(\varphi_k)<\log (1+\varepsilon).
\end{equation}

Since the pants decompositions in Step 2 of the proof (1) give  exhaustions $X_{E(\omega)}$ and $X_{E(\widetilde \omega)}$, (\ref{eqn:asymtotic}) implies the maximal dilatation $K(\Phi)=e^{d(\Phi)}$ is less than $(1+\varepsilon)$ on the outside of a sufficiently large compact subset of $X_{E(\omega)}$.
Therefore, $\Phi : X_{E(\omega)}\to X_{E(\widetilde\omega)}$ is asymptotically conformal.
\qed

\section{Proof of Theorem \ref{Them:Main3}}
Suppose that there exists a $K$-quasiconformal map from $X_{\mathcal C}$ to $X_{E(\omega)}$.
	Let $d>0$ be the smallest hyperbolic length in all simple closed curves in $X_{\mathcal C}$.
	By Wolpert's formula (cf. \cite{ShigaHyper}, \cite{Wolpert}), the hyperbolic length of any simple closed curve in $X_{E(\omega)}$ is not less than $K^{-1}d$.
	
	Let $\varepsilon >0$ be an arbitrary small constant.
	Since $\sup \{q_n\mid n\in \mathbb N\}=1$, there exist a sequence $\{n_k\}_{k=1}^{\infty}$ in $\mathbb N$ and $N_0\in \mathbb N$ such that 
	\begin{equation*}
		1-\varepsilon < q_{n_k} <1,
	\end{equation*}
	if $k>N_0$.
	
	Now we look at $I_{q_{k}-1}^{1}$ of $E_{q_{k}-1}$ for $k>N_0$.
	Then, $I_{q_k}^{1}\subset E_{q_{k}}$ is an interval of length $\frac{1}{2}(1-q_k)|I_{q_{k}-1}^{1}|<\frac{1}{2}\varepsilon |I_{q_{k}-1}^{1}|$.
	Therefore, we may take an annulus $A_{k}$ in $X_{E(\omega)}$ bounded by two concentrated circles $C_{k}^{1}, C_{k}^{2}$ such that the radius of $C_{k}^{1}$ is $\frac{1}{4}\varepsilon |I_{q_{k}-1}^{1}|$ and that of $C_{k}^{2}$ is $(\frac{1}{2}-\frac{1}{4}\varepsilon) |I_{q_{k}-1}^{1}|$.
	If we take $\varepsilon >0$ sufficiently small, then the length of the core curve of $A_k$ with respect to the hyperbolic metric on $A_k$ becomes smaller than $K^{-1}d$.
	Since $A_k\subset X_{E(\omega)}$, the length of the core curve of $A_k$ with respect to the hyperbolic metric of $X_{E(\omega)}$ is not greater than the length with respect to the hyperbolic metric of $A_k$.
	Thus, we find a closed curve in $X_{E(\omega)}$ whose length is less that $K^{-1}d$.
It is a contradiction and we complete the proof of the theorem.
	
\section{Proofs of Corollaries}
\noindent
{\bf{Proof of Corollary \ref{Cor1}.}}
Let $\Lambda$ be the limit set of the Schottky group $G$. 
We have shown (\cite{ShigaPrepri} Theorem 6.2) that $X_{\Lambda}$ is quasiconformally equivalent to $X_{\mathcal C}$.
Hence, it follows from Theorem \ref{Thm:Main} that $X_{E}$ is quasiconformally equivalent to $X_{\Lambda}$ as desired.
\qed

\medskip
\noindent
{\bf Proof of Corollary \ref{Cor3}.} Since $\mathcal C=E(\omega_0)$ for $\omega_0=(\frac{1}{3})_{n=1}^{\infty}$, the statement follows immediately from Theorem \ref{Thm:Main2} (1). \qed

\medskip
\noindent
{\bf{Proof of Corollary \ref{Cor2}}.}
Let $\varphi : X_{\Lambda}\to X_{E}$ be a quasiconformal map given by Corollary \ref{Cor1}. 
Take any quasiconformal map $\psi$ on $X_{E}$ to $\widehat{\mathbb C}$.
Then, $\Phi :=\psi\circ\varphi$ be a quasiconformal map on $X_{\Lambda}$.
It is known that any quasiconformal map on $X_{\Lambda}$ is extended to a quasiconformal map on $\widehat{\mathbb C}$ (cf. 
\cite{ShigaKlein}).
Hence, both $\varphi$ and $\Phi$ are extended to $\widehat{\mathbb C}$ and so is $\psi=\Phi\circ\varphi^{-1}$.
\qed

\medskip
\noindent
{\bf Proof of Corollary \ref{Cor:Hausdorff}.}
Let $\Psi : \mathbb C\to \mathbb C$ be the quasiconformal mapping given in \S 4.
We put $D=\textrm{dim}_H (E(\omega))$ and $\widetilde{D}=\textrm{dim}_{H}(E(\widetilde{\omega}))$.
We use the argument in the proof of Theorem \ref{Thm:Main2} (2).

For any $\varepsilon >0$, there exists $N\in \mathbb N$ such that
\begin{equation*}
	K(\Phi_{k})<1+\varepsilon
\end{equation*}
if $k>N$, where $\Phi_k$ is the quasiconformal mapping given in \S 4.
Therefore, $\Phi|_{U_N}$ is a $(1+\varepsilon)$-quasiconformal mapping on $U_N:=E(\omega)\cup\bigcup_{k>N}\bigcup_{i=1}^{2^k}P_k^i$.
Here, we use the following result by Astala \cite{Astala}.

\begin{Pro}
	Let $\Omega, \Omega'$ be planar domains and $f : \Omega\to \Omega'$ $K$-quasiconformal mapping.
	Suppose that $E\subset \Omega$ is a compact subset of $\Omega$.
	Then,
	\begin{equation}
	\label{eqn:Astala}
		\textrm{dim}_{H}(f(E))\leq \frac{2K\textrm{dim}_{H}(E)}{2+(K-1)\textrm{dim}_{H}(E)}.
	\end{equation}
\end{Pro}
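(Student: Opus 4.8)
The plan is to deduce the inequality (\ref{eqn:Astala}) from Astala's area distortion theorem and then convert the area estimate into a dimension estimate by a Hölder balancing argument carried out scale by scale. First I would observe that both the hypothesis and the conclusion are local and that Hausdorff dimension is countably stable, $\dim_H(\bigcup_i F_i)=\sup_i\dim_H(F_i)$. Covering the compact set $E\subset\Omega$ by finitely many closed disks $\overline{B}_i\Subset\Omega$, it therefore suffices to treat each piece $E\cap B_i$ separately. On each $B_i$ the map $f$ differs by a conformal map from the global $K$-quasiconformal homeomorphism of $\widehat{\mathbb C}$ obtained by solving the Beltrami equation for the complex dilatation of $f$ extended by zero outside $B_i$; since conformal maps preserve $\dim_H$, I may assume $f$ is a normalized $K$-quasiconformal map of the whole plane and $E\subset\mathbb D$. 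I would also record that the asserted formula is exactly the statement that the reciprocals, shifted by $\tfrac12$, transform affinely,
\begin{equation*}
	\frac{1}{\dim_H(f(E))}-\frac12=\frac1K\left(\frac{1}{\dim_H(E)}-\frac12\right),
\end{equation*}
so the real content is to produce this Möbius-type relation between the two dimensions.

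The engine is Astala's area distortion theorem: for a normalized $K$-quasiconformal map $f$ of the plane and any measurable $A\subset\mathbb D$ one has $|f(A)|\le C(K)\,|A|^{1/K}$, with the exponent $1/K$ sharp. I would take this as the deep external input and extract two consequences. The localized form: rescaling a disk $B$ to $\mathbb D$ by a Möbius map and using that $f(B)$ is a $K$-quasidisk comparable to a round disk of radius $\operatorname{diam} f(B)$, one obtains $|f(A)|/|f(B)|\le C(K)(|A|/|B|)^{1/K}$ for $A\subset B$. The collective form for pairwise disjoint disks $B_i\subset\mathbb D$: since a homeomorphism preserves disjointness and $|f(B_i)|\asymp(\operatorname{diam} f(B_i))^2$ for quasidisks,
\begin{equation*}
	\sum_i(\operatorname{diam} f(B_i))^2\asymp\Bigl|f\bigl(\textstyle\bigcup_i B_i\bigr)\Bigr|\le C(K)\Bigl(\sum_i|B_i|\Bigr)^{1/K}\asymp C(K)\Bigl(\sum_i(\operatorname{diam} B_i)^2\Bigr)^{1/K}.
\end{equation*}

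The third step is the passage from area to dimension. Fixing $s>\dim_H(E)=:d$ and a cover of $E$ by disks $B_i=B(z_i,r_i)$ with $\sum_i r_i^s\le1$, the quasidisks $f(B_i)$ cover $f(E)$, and with $R_i=\operatorname{diam} f(B_i)$ I would bound the $t$-dimensional Hausdorff content of $f(E)$ by $\sum_i R_i^t$ and seek the smallest $t$ for which this stays bounded as the mesh tends to $0$. The decisive point is that the area distortion must be fed into each cover element at its own scale, not merely at the top scale: balancing the area factor $R_i^2$ (controlled by $r_i^{2/K}$ through the localized estimate) against the counting factor $r_i^s$ by a Hölder inequality, and choosing the exponents so that the resulting bound is scale invariant, forces precisely the affine reciprocal relation displayed above, hence $\dim_H(f(E))\le d'$ with $d'=\dfrac{2Kd}{2+(K-1)d}$ after letting $s\downarrow d$.

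The main obstacle is twofold, and both parts live inside the area estimate. The deeper one is the area distortion theorem itself, whose sharp exponent $1/K$ I would not attempt to re-derive: its proof rests on embedding $f$ in a holomorphic motion, differentiating in the Beltrami parameter, and applying thermodynamic-formalism bounds for the associated pressure, which is the core of Astala's original argument. The subtler obstacle at the level of this deduction is to avoid a lossy estimate: a pointwise use of the Hölder continuity $R_i\lesssim r_i^{1/K}$ yields only the weak bound $\dim_H(f(E))\le K\dim_H(E)$, so recovering the sharp formula requires exploiting the weak-type content of area distortion—namely that the set of scales and locations on which $f$ expands maximally is itself thin—through the scale-invariant Hölder balance rather than through a worst-case pointwise bound.
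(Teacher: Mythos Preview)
The paper does not prove this proposition at all: it is quoted verbatim as ``the following result by Astala \cite{Astala}'' and used as a black box in the proof of Corollary~\ref{Cor:Hausdorff}. So there is no ``paper's own proof'' to compare against; your proposal goes well beyond what the paper does.

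As to the content of your sketch, the overall architecture is the right one and matches Astala's original argument: localize and normalize to a global $K$-quasiconformal map of the plane with $E\subset\mathbb D$, take the sharp area distortion theorem $|f(A)|\le C(K)|A|^{1/K}$ as the deep input, and then upgrade area distortion to a dimension bound. Your observation that the target inequality is equivalent to the affine relation $\frac{1}{\dim_H f(E)}-\frac12=\frac1K\bigl(\frac{1}{\dim_H E}-\frac12\bigr)$ is exactly the form in which the result is most naturally proved.

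The one place where your outline is genuinely thin is the ``H\"older balancing'' step. As you correctly note, the naive pointwise estimate $R_i\lesssim r_i^{1/K}$ gives only $\dim_H f(E)\le K\dim_H E$. To get the sharp bound one needs the \emph{collective} area estimate on packings, and then an interpolation: for disjoint disks $B_i$ with $\sum r_i^s$ small one writes $\sum R_i^t=\sum R_i^{2\theta}R_i^{t-2\theta}$, controls $\sum R_i^2$ by $(\sum r_i^2)^{1/K}$ via area distortion, and controls the remaining factor by H\"older continuity; optimizing in $\theta$ yields the stated $t$. Your text gestures at this but does not actually carry out the optimization or explain why it closes. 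In Astala's paper this is done via a careful weak-type/variational argument (or, in later presentations, via the holomorphic motion and thermodynamic formalism that you mention). If you intend this as more than a reference, that step needs to be written out.
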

It follows from (\ref{eqn:Astala}) that
\begin{equation*}
	\textrm{dim}_{H}(E(\widetilde{\omega}))\leq \frac{2(1-\varepsilon)\textrm{dim}_{H}(E(\omega))}{2+\varepsilon\textrm{dim}_{H}(E(\omega))}.
\end{equation*}
Since $\varepsilon >0$ could be an arbitrary small, we obtain
\begin{equation*}
	\textrm{dim}_{H}(E(\widetilde{\omega}))\leq \textrm{dim}_{H}(E(\omega)).
\end{equation*}

By considering $\Phi^{-1}$, we get the reverse inequality for $\textrm{dim}_{H}(E(\omega))$ and $\textrm{dim}_{H}(E(\widetilde{\omega}))$. Thus, we conclude that $\textrm{dim}_{H}(E(\omega))=\textrm{dim}_{H}(E(\widetilde{\omega}))$ as desired.
\qed
\section{Examples} 
	\begin{Ex}
	\label{Ex:hyperRat}
Let $f_c (z)=z^2+c$.
Suppose that $c$ is not in the Mandelbrot set.
Then, it is well known that $f_c$ is hyperbolic and the Julia set $\mathcal J_{f_c}$ is a Cantor set.
Thus, $f_c$ satisfies the condition of Theorem \ref{Thm:Main}.
\end{Ex}
\begin{Ex}
\label{Ex:hypBl}
Let $B_0(z)$ be a Blaschke product of degree $d>1$.
Suppose that $B_0$ has an attracting fixed point on the unit circle $T:=\{|z|=1\}$.
Since the Julia set $\mathcal J_{B_0}$ of $B_0$ is included in $T$, it has to be a Cantor set.
It is also easy to see that $B_0$ is hyperbolic.
Thus, $B_0$ satisfies the condition on Theorem I.
\end{Ex}

In Theorem \ref{Thm:Main2}, we have estimated the maximal dilatations for sequences with lower bound. 
In next example, we may estimate the maximal dilatation for sequences without lower bound. 
\begin{Ex}
	For $0<a<1$ and a fixed $L\in \mathbb N$, we put $q_n=a^{n}$ and $\widetilde{q}_n=a^{n+L}$ and we consider $E(\omega)$, $E(\widetilde{\omega})$ for $\omega=(q_n)_{n=1}^{\infty}$, $\widetilde{\omega}=(\widetilde{q}_n)_{n=1}^{\infty}$.
	By using the same idea as in the proof of Theorem \ref{Thm:Main2}, we claim that there exists an $\exp (Ca^{-L})$-quasiconformal mapping $\varphi : \mathbb C\to \mathbb C$ with $\varphi (E(\omega))=E(\widetilde{\omega})$, where $C>0$ is a constant independent of $\omega$ and $\widetilde{\omega}$.
	
	\medskip
	\noindent
	{\bf Proof of the claim.} We use the same notations for $E(\omega)$ and $E(\widetilde{\omega})$ as those in the proof of Theorem \ref{Thm:Main2}.
	Then, 
	$$
	E_k=\cup_{i=1}^{2^k}I_k^i,\quad [0, 1]=E_k\cup\bigcup_{i=1}^{2^k-1}J_k^i
	$$ 
	and for $i=1, 2, \dots , 2^k$,
	$$
	|I_{k}^{i}|=\left (\frac{1}{2}\right )^{k}\prod_{j=1}^{k}(1-a^j). 
	$$
	If $i$ is odd, then
	\begin{equation*}
		|J_{k+1}^i|=a^{k+1}|I_k^1|\geq 2a^{k+1}|I_{k+1}^1|.
	\end{equation*}
	If $i=2^{\ell}m$ $(1\leq\ell\leq k; m$ is odd), then we have
	\begin{equation*}
		|J_{k+1}^i|=|J_{k-\ell +1}^m|\geq 4a^{k+1}|I_{k+1}^1|.
	\end{equation*}
	Thus, we conclude that
	\begin{equation}
	\label{eqn:NoLB}
		|J_{k+1}^i|\geq 2a^{k+1}|I_{k+1}^1|,
	\end{equation}
	for $i=1, 2, \dots 2^{k+1}-1$.
	
	We draw a circle $C_k^i$ centered at the midpoint of $I_{k}^i$ with radius $\frac{1}{2}(1+a^k)|I_{k}^1|$ for each $k\in \mathbb N$ and $1\leq i\leq 2^k$.
From (\ref{eqn:NoLB}), we see that $C_k^i\cap C_k^j=\emptyset$ if $i\not=j$.
Therefore, $\cup_{k=1}^{\infty}\cup_{i=1}^{2^k}C_k^i$ gives a pants decomposition of $X_{E(\omega)}$.
We also draw circles $\widetilde{C}_{k}^i$ for $\widetilde{\omega}$ by the same way.
Then, $\cup_{k=1}^{\infty}\cup_{i=1}^{2^k}\widetilde{C}_k^i$ gives a pants decomposition of $X_{E(\widetilde{\omega})}$.

We denote by $P_k^i$ a pair of pants bounded by $C_k^i, C_{k+1}^{2i-1}$ and $C_{k+1}^{2i}$.
As in Step 3 of the proof of Theorem \ref{Thm:Main2}, we may identify $P_{k}^i$ with a pair of pants $\mathcal{P}_k$ bounded by $C_{k, 1}, C_{k, 2}$ and $C_{k, 3}$, where $C_{k, 1}$ is a circle centered at the origin with radius $1+a^k$, $C_{k, 2}$ is centered at
\begin{equation*}
	-x_k :=-a^{k+1}-\frac{1}{2}(1+a^{k+1})(1-a^{k+1})
\end{equation*}
with radius
\begin{equation*}
	r_k :=\frac{1}{2} (1+ a^{k+1})(1-a^{k+1})
\end{equation*}
and $C_{k, 3}$ is centered at $x_k$ with radius $r_k$.

Similarly, we take a pair of pants $\widetilde{P}_k^i$ bounded by $\widetilde C_k^i, \widetilde  C_{k+1}^{2i-1}$ and $\widetilde C_{k+1}^{2i}$, which is conformally equivalent to a pair of pants $\widetilde{\mathcal P}_k$ bounded by $\widetilde C_{k, 1}, \widetilde C_{k, 2}$ and $\widetilde C_{k, 3}$, where 
$\widetilde C_{k,1}$ is the same circle as $C_{k,1}$, $\widetilde C_{k,2}$ is centered at
\begin{equation*}
	-\widetilde x_k :=--a^{k+L+1}-\frac{1}{2}(1+a^{k+L+1})(1-a^{k+L+1})
\end{equation*}
wirh radius
\begin{equation*}
	\widetilde r_k :=\frac{1}{2} (1+ a^{k+L+1})(1-a^{k+L+1})
\end{equation*}
and $\widetilde C_{k, 3}$ is centered at $\widetilde{x}_k$ with radius $\widetilde{r}_k$.

We also take an intermediate pair of pants, $\widehat{P}_k$ similar to that of the proof of Theorem \ref{Thm:Main2}.
Then, by using exactly the same method, we may construct a $\exp (Ca^{-L})$-quasiconformal mapping from $P_{k}^i$ onto $\widetilde{P}_k^i$, where $C>0$ is a constant independent of $k$ and $i$. Since the calculation is a bit long but the same as in \S 4, we may leave it to the reader.

By gluing those quasiconformal mappings together, we get an $\exp (Ca^{-L})$-quasiconformal mapping $\varphi : \mathbb C\to \mathbb C$ with $\varphi (E(\omega))=E(\widetilde{\omega})$ as desired. \qed
\end{Ex}  

\medskip
\noindent
{\bf Cantor Julia sets of Blaschke products with parabolic fixed points.}

We showed (\cite{ShigaPrepri} Example 3.2) that a Cantor set which is the limit set of an extended Schottky group is not quasiconformally equivalent to the limit set of  a Schottky group. 
We discuss the same thing for Cantor sets defined by non-hyperbolic rational functions.

Let $B_1(z)$	be a Blaschke product with a parabolic fixed point on the unit circle $T$.
Suppose that there exists only one attracting petal at the parabolic fixed point.
Then, we see that the Julia set $\mathcal J_{B_1}$ is a Cantor set on $T$ (see \cite{Carleson-Gamelin} IV. 2. Example).
However, $B_1$ is not hyperbolic since it has a parabolic fixed point.

It follows from Theorem \ref{Thm:Main} that  two Riemann surfaces $X_{\mathcal J_{f_c}}$ for Example \ref{Ex:hyperRat} and $X_{\mathcal J_{B_0}}$ for Example \ref{Ex:hypBl} are quasiconformally equivalent.
While the Julia set $\mathcal J_{B_1}$ of $B_1$ is also a Cantor set, it is not hyperbolic.
Therefore, we cannot apply Theorem \ref{Thm:Main} for $B_1$.

	Now, we consider the Martin compactification of the complement. For a general theory of the Martin compactification, we may refer to \cite{Constantinescu-Cornea}.
	Here, we note the following.
	\begin{Pro}
	\label{HyperBl}
		Let $B$ be a hyperbolic Blaschke product of degree $d>1$. 
		Suppose that the Julia set $\mathcal J_{B}$ is a Cantor set in $T$.
		Then,
		 the Martin compactification of $X_{\mathcal J_{B}}$ is homeomorphic to $\widehat{\mathbb C}$. 
	\end{Pro}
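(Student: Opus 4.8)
The plan is to show that the inclusion $X_{\mathcal J_{B}}\hookrightarrow\widehat{\mathbb C}$ extends to a homeomorphism from the Martin compactification $X_{\mathcal J_{B}}^{*}$ onto $\widehat{\mathbb C}$, and the key point to establish is that over each $\xi\in\mathcal J_{B}$ there lies exactly one Martin boundary point, that it is minimal, and that there are no non-minimal Martin boundary points. Granting this, $X_{\mathcal J_{B}}^{*}$ is, as a set, $X_{\mathcal J_{B}}\sqcup\mathcal J_{B}=\widehat{\mathbb C}$; since a continuous bijection from a compact space onto a Hausdorff space is a homeomorphism, it then suffices to check that the identification $X_{\mathcal J_{B}}^{*}\to\widehat{\mathbb C}$ is continuous, which will follow formally from the fact that the Martin kernels attached to distinct points of $\mathcal J_{B}$ are distinct.

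First I would extract from hyperbolicity the geometric regularity of $\mathcal J_{B}$. By Proposition \ref{Pro:hyperbolic} and the standard theory (\cite{Carleson-Gamelin}, V.2), $B$ admits a Markov partition of $\mathcal J_{B}$ into finitely many arcs on which all inverse branches of the iterates $B^{n}$ have uniformly bounded distortion. From this I would record two facts about $\mathcal J_{B}\subset T$, with absolute constants: (i) $\mathcal J_{B}$ is \emph{porous} --- there is $\lambda\in(0,1)$ such that for every $\xi\in\mathcal J_{B}$ and every small $r>0$ the arc $B(\xi,r)\cap T$ contains a complementary gap of $\mathcal J_{B}$ of length at least $\lambda r$; and (ii) $\mathcal J_{B}$ satisfies the \emph{capacity density condition}, $\operatorname{cap}\bigl(\mathcal J_{B}\cap\overline{B(\xi,r)}\bigr)\ge c\,r$ for all $\xi\in\mathcal J_{B}$ and small $r>0$. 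Both are immediate from bounded distortion: inside any cylinder $I$ of the partition, $\mathcal J_{B}\cap I$ is a bounded-distortion copy of $\mathcal J_{B}\cap I'$ for a coarser cylinder $I'$, so its gaps and its capacity are controlled by $|I|$, and any ball $B(\xi,r)$ with $\xi\in\mathcal J_{B}$ contains a cylinder of size comparable to $r$.

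Next, combining (i) with $\mathcal J_{B}\subset T$, I would observe that $X_{\mathcal J_{B}}$ is uniformly locally connected at every boundary point: the gap furnished by porosity is an arc of $T\setminus\mathcal J_{B}$, and a thin tube around it joins any two points of $X_{\mathcal J_{B}}\cap B(\xi,r)$ lying on either side of $T$ by a Harnack chain of bounded length contained in $X_{\mathcal J_{B}}\cap B(\xi,Cr)$; thus $X_{\mathcal J_{B}}$ is a uniform (John-type) domain near $\mathcal J_{B}$. Here the total disconnectedness of $\mathcal J_{B}$ is essential: it is what makes the two ``sides'' of $T$ communicate at small scale, unlike for a slit domain. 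Together with (ii), this places $X_{\mathcal J_{B}}$ among the Greenian domains to which the boundary Harnack principle applies uniformly down to the boundary. Consequently every $\xi\in\mathcal J_{B}$ is a regular boundary point, and the boundary Harnack principle forces the Martin kernel with pole along any sequence approaching $\xi$ to be uniquely determined by $\xi$, the kernels at distinct points of $\mathcal J_{B}$ to be non-proportional, and every Martin boundary point to be minimal (cf.\ the potential theory of Denjoy-type domains in the work of Ancona and of Benedicks, and \cite{Constantinescu-Cornea} for the general Martin theory). Hence $\xi\mapsto K(\cdot,\xi)$ identifies $\mathcal J_{B}$ with the full Martin boundary, and continuity of $X_{\mathcal J_{B}}^{*}\to\widehat{\mathbb C}$ follows because any sequence converging to $K(\cdot,\xi)$ in the Martin topology can accumulate in $\widehat{\mathbb C}$ only at points $\eta\in\mathcal J_{B}$ with $K(\cdot,\eta)=K(\cdot,\xi)$, i.e.\ only at $\xi$; this yields $X_{\mathcal J_{B}}^{*}\cong\widehat{\mathbb C}$.

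The main obstacle I anticipate is the last step: formulating and invoking a boundary Harnack principle that is genuinely uniform all the way to the totally disconnected set $\mathcal J_{B}$, and then deducing from it both the uniqueness of the Martin point over each $\xi$ (no ``top/bottom'' splitting) and the absence of non-minimal points. The dynamics enters only in the geometric step to certify the regularity of $\mathcal J_{B}$; everything afterwards is potential theory on Denjoy-type domains and is insensitive to the particular hyperbolic $B$.
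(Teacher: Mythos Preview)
The paper does not actually prove Proposition~\ref{HyperBl}; it is simply ``noted'' and then used, with the justification implicitly deferred to the Denjoy-domain theory of Benedicks~\cite{Benedicks} and Segawa~\cite{Segawa} and to the author's earlier paper~\cite{ShigaKlein}. So there is no in-text argument to match yours against.

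That said, your outline is a valid proof. Hyperbolicity does yield uniform porosity of $\mathcal J_{B}\subset T$ and the capacity density condition; these make $X_{\mathcal J_{B}}$ a uniform domain satisfying the CDC, so a boundary Harnack principle (Ancona, Aikawa) applies and identifies the Martin boundary with the Euclidean one, and you have correctly isolated the one delicate point --- porosity is exactly what lets Harnack chains pass through the gaps of $T\setminus\mathcal J_{B}$ and prevents a ``top/bottom'' splitting of the Martin fibre. The approach suggested by the paper's own toolkit, visible in the proof of Lemma~\ref{parabBl} immediately following the statement, is more hands-on and specific to Denjoy domains: conjugate a given $\xi\in\mathcal J_{B}$ to $\infty$ and apply Benedicks' integral test (Proposition~\ref{Pro:Benedicks}); porosity guarantees gaps of size comparable to $|x|$ at every scale, so $\beta_{x}^{E}(x)=1$ on a set of infinite logarithmic length and the integral diverges, yielding a single minimal point over each $\xi$. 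Your route is more conceptual and robust but requires importing a boundary-Harnack/Martin theorem not stated in the paper; the Benedicks route is narrower but self-contained with what the paper already quotes. Either is acceptable --- just be sure, in a write-up, to cite a precise theorem for the implication ``uniform $+$ CDC $\Rightarrow$ Martin boundary $=$ topological boundary'' rather than leaving it at the level of a heuristic.
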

	
	Hence, the same statements as in Proposition \ref{HyperBl} hold for $X_{\mathcal J_{0}}:=\widehat{\mathbb C}\setminus\mathcal J_0$ and the quasiconformal map $\varphi$ on $X_{\mathcal J_{0}}$ is extended to a homeomorphism of the Martin compactification of $X_{\mathcal J_{0}}$.
	
	Next, we consider the Martin compactification of $X_{\mathcal J_{1}}$, especially the set of the Martin boundary over the parabolic fixed point of $B_1$.
	If the set contains at least two points, then it follows from Proposition \ref{HyperBl} that there exists no quasiconformal map from $X_{\mathcal J_0}$ to $X_{\mathcal J_1}$.
	
	Indeed, in \cite{ShigaKlein} we observe the 
	Martin compactification of the complement of the limit set of an extended Schottky group and show that the set of the Martin boundary over a parabolic fixed point consists of more than two points. It is a key fact to show that the limit set of the extended Schottky group is not quasiconformally equivalent to that of a Schottky group (\cite{ShigaPrepri}).
	However, by using an argument of Benedicks (\cite{Benedicks}) (see also Segawa \cite{Segawa}) on the Martin compactification, we may show the following.
	\begin{lemma}
	\label{parabBl}
		In the Martin compactification of $X_{\mathcal J_{1}}$, there is exactly one minimal point over the parabolic fixed point of $B_1$.
	\end{lemma}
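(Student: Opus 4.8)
The plan is to localize the problem near $p$ and then apply the Benedicks--Segawa classification of Martin boundaries of Denjoy-type domains. First I would use that $\mathcal J_{B_1}\subset T$: choose a small disk $D_p$ about $p$ and a conformal chart carrying $T\cap D_p$ onto a real segment, so that in this chart $X_{\mathcal J_1}\cap D_p$ becomes $D(0,r)\setminus F$, where $F$ is the image of $\mathcal J_{B_1}$, a closed (Cantor) subset of the real axis with $0\in F$. By the locality of the Martin boundary, the minimal Martin boundary points of $X_{\mathcal J_1}$ lying over $p$ are in bijection with those of this local model lying over $0$, so it suffices to count the latter.

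Next I would pin down the geometry of $F$ near $0$ from the Leau--Fatou normal form at $p$. Because $B_1$ has \emph{exactly one} attracting petal, $T$ is tangent at $p$ to the parabolic axis; the attracting petal meets $T$ in an arc having $p$ as an endpoint (hence a component of $T\setminus\mathcal J_{B_1}$), while the repelling petal meets $T$ on the opposite side of $p$, where $\mathcal J_{B_1}$ accumulates. Consequently $F$ is \emph{one-sided} at $0$: after reflecting if necessary, $F\subset[0,r)$. Moreover the repelling Fatou coordinate conjugates $B_1$ near $p$ to $w\mapsto w+1$ on a right half-plane, so in that coordinate $F$ is a $1$-periodic set $\bigcup_{n\ge 0}(K_0+n)$ with $K_0$ a fixed perfect set of positive logarithmic capacity; transporting back to $0$ by the inverse Fatou map introduces the standard parabolic distortion, placing pieces of $F$ of diameter $\asymp n^{-2}$ at distance $\asymp n^{-1}$ from $0$. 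A short capacity estimate then shows that in every dyadic annulus $\{2^{-k-1}\le|x|\le 2^{-k}\}$ the set $F$ carries a definite proportion of the line, so the Wiener-type series for thinness of $F$ at $0$ diverges, i.e. $F$ is non-thin at $0$ from the single side on which it lies.

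Finally I would invoke the Benedicks dichotomy (\cite{Benedicks}, see also \cite{Segawa}) for $\mathbb{C}\setminus F$ with $F\subset\mathbb R$: the cone of positive harmonic functions vanishing on $F$ near $0$ has dimension equal to the number of non-thin (``substantial'') sides of $F$ at $0$; by the previous step this number is exactly one, so there is exactly one minimal Martin boundary point over $0$, hence over $p$.

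I expect the main obstacle to be the second step: converting the one-petal hypothesis into the quantitative capacitary statement, i.e. combining the Leau--Fatou coordinate estimates with the distortion and capacity estimates so that non-thinness of $F$ at $0$ is genuinely established, and matching this with the precise form of the Benedicks--Segawa criterion (non-thin from exactly one side $\Longleftrightarrow$ exactly one minimal point, non-thin from both sides $\Longleftrightarrow$ two points). A secondary point needing care is the justification that the count is truly local, so that the global surface $X_{\mathcal J_1}$ reduces to $D(0,r)\setminus F$. By contrast, for the limit set of an extended Schottky group the portion of the limit set near a parabolic fixed point is not contained in a line, so Benedicks' dichotomy does not apply and, as in \cite{ShigaKlein}, one obtains more than two points over the parabolic fixed point.
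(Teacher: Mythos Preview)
Your strategy and the paper's share the same core idea: pass to a Denjoy-type domain and apply Benedicks' criterion, using the single-petal hypothesis to see that the Julia set is \emph{one-sided} near the parabolic point. The paper's execution, however, is much more direct and sidesteps precisely the two steps you flag as delicate.

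First, instead of localizing in a small disk about $p$ and sending $p\mapsto 0$, the paper applies a \emph{global} M\"obius transformation $\gamma$ with $\gamma(T)=\mathbb{R}\cup\{\infty\}$ and $\gamma(p)=\infty$. Since $\gamma$ is a conformal automorphism of $\widehat{\mathbb{C}}$, it carries the Martin compactification of $X_{\mathcal J_1}$ isomorphically onto that of $\widehat{\mathbb{C}}\setminus\gamma(\mathcal J_{B_1})$. No ``locality of the Martin boundary'' argument is needed at all; your secondary worry simply disappears.

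Second, and this is the main point, the paper uses Benedicks' criterion in its integral form: there are two minimal points over $\infty$ if and only if
\[
\int_{|x|\ge 1}\frac{\beta_x^{E}(x)}{|x|}\,dx<\infty.
\]
The one-petal hypothesis gives $\gamma(\mathcal J_{B_1})\cap\{\operatorname{Re}z<-M\}=\emptyset$ for some large $M$, so for $x<-M$ the obstacle $E\cap Q(x,\alpha|x|)$ is empty and hence $\beta_x^{E}(x)\equiv 1$. The integral then diverges trivially on that half-line, and one minimal point follows immediately from the \emph{empty} side, with no analysis whatsoever of the Julia side. Your Step~2 (Leau--Fatou coordinates, periodic structure, capacity estimates establishing non-thinness of $F$) is therefore unnecessary; the ``main obstacle'' you anticipate never arises.

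A related remark: the formulation ``dimension equals the number of non-thin sides'' is not exactly the Benedicks dichotomy. In his setting there are always either one or two minimal points over $\infty$, with two precisely when the integral converges; one-sidedness forces divergence regardless of how dense the set is on the occupied side. So even within your framework, proving non-thinness on the Julia side is not what is required.
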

\begin{Rem}
	In the Martin compactification of a Riemann surface, the set corresponding to a topological boundary component of the Riemann surface is connected and the minimal points in the set are regarded as extreme points of a convex set. Thus, if the set over a boundary component on the Martin compactification contains only one minimal point, then it consists of only one point, that is, the minimal point.
\end{Rem}	
	\begin{proof}
	To prove the lemma, we use a result by Benedicks.
	
	We denote by $Q(t, r)$ $(t\in \mathbb R, r>0)$, the square
	\begin{equation*}
		\left \{x+iy\mid |x-t|<\frac{r}{2}, |y|<\frac{r}{2}\right \}.
	\end{equation*} 
	For a fixed $\alpha$ with $0<\alpha <1$ and every $x\in \mathbb R\setminus\{0\}$, we consider the solution of the Dirichlet problem on $Q(x, \alpha |x|)\setminus E$ with boundary values one on $\partial Q(x, \alpha |x|)$ and zero on $E\cap Q(x, \alpha |x|)$. 
	We denote the solution by $\beta_{x}^{E}$.
	Then, Benedicks showed the following.
	\begin{Pro}
	\label{Pro:Benedicks}
		On the Martin compactification of $\widehat{\mathbb C}\setminus E$, there exist more than two points over $\infty$ if and only if 
		\begin{equation}
			\int_{|x|\geq 1}\frac{\beta_{x}^{E}(x)}{|x|}dx <\infty.
		\end{equation}
	\end{Pro}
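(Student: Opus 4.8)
The plan is to prove Benedicks' criterion by translating the count of Martin boundary points over $\infty$ into a question about Brownian motion on $\Omega=\widehat{\mathbb C}\setminus E$, and then to evaluate that question through the local boxes $Q(x,\alpha|x|)$ that define $\beta_x^E$. Throughout I identify $E$ with a closed subset of $\mathbb R\cup\{\infty\}$ and write $\sigma(z)=\bar z$ for the reflection fixing $E$. The points of the Martin boundary lying over $\infty$ are exactly the minimal positive harmonic functions on $\Omega$ that vanish (quasi-everywhere) on $E$ and whose pole sits at $\infty$; equivalently they are the extreme rays of the cone $\mathcal P_\infty$ obtained as normalized limits $K(\,\cdot\,;\zeta)=\lim G(\,\cdot\,;\zeta)/G(z_0;\zeta)$ as $\zeta\to\infty$ inside $\Omega$. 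Because $\infty$ is approached both from the upper and lower half-planes and along the two ends of $\mathbb R$, the symmetry $\sigma$ organizes the candidate functions into a small family, and the content of the theorem is whether these candidates stay collapsed (yielding at most two points) or genuinely split (yielding more than two points). I will show the split occurs precisely when the displayed integral converges.

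First I would reformulate the existence of the extra extreme rays probabilistically. A positive harmonic function with pole at $\infty$ records how Brownian motion started deep in $\Omega$ escapes to $\infty$; additional extreme rays beyond the collapsed configuration exist if and only if Brownian paths escaping to $\infty$ possess a well-defined asymptotic \emph{side} of the real axis and \emph{end} along $\mathbb R$, i.e.\ if and only if almost every escaping path crosses $\mathbb R\setminus E$ only finitely often before leaving every compact set. The heart of the argument is therefore to decide whether the number of such crossings is a.s.\ finite, and to tie this to the boxes $Q(x,\alpha|x|)$.

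To make this quantitative I would decompose $\{|x|\ge 1\}$ into dyadic scales and, on each scale, use the box $Q(x,\alpha|x|)\setminus E$ together with the strong Markov property. The quantity $\beta_x^E(x)=\Pr_x[\tau_{\partial Q(x,\alpha|x|)}<\tau_E]$ is exactly the probability that the motion leaves a box before meeting $E$, so it measures the local thinness of $E$ at scale $|x|$. Summing these crossing probabilities over a dyadic partition and comparing the sum with the integral gives the comparison
\begin{equation*}
\sum_{k}\bigl(\text{average of }\beta_x^E(x)\text{ on }2^k\le|x|<2^{k+1}\bigr)\;\asymp\;\int_{|x|\ge 1}\frac{\beta_x^E(x)}{|x|}\,dx .
\end{equation*}
A Borel--Cantelli and product-of-probabilities estimate then shows that the expected number of crossings of $\mathbb R\setminus E$ before escape is finite exactly when this sum converges, i.e.\ exactly when the integral is finite; in that regime the distinct side/end combinations yield distinct minimal functions and one obtains more than two points over $\infty$, while divergence forces infinitely many crossings and collapses them to at most two.

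The step I expect to be the main obstacle is the passage from the pathwise crossing statement to the exact structure of $\mathcal P_\infty$. Two technical inputs are needed. The first is a boundary Harnack principle for $\Omega$: to compare two positive harmonic functions that both vanish on a portion of $E$ and to show their ratio stabilizes at $\infty$, which is what upgrades the probabilistic limits to genuine and minimal Martin kernels. This is delicate near irregular points of $E$, where vanishing holds only quasi-everywhere and must be controlled through the fine topology and a Wiener-type regularity estimate inside the boxes $Q(x,\alpha|x|)$. The second is a Phragm\'en--Lindel\"of argument at $\infty$ to bound $\dim\mathcal P_\infty$ from above and to certify that the collapsed configuration is exactly the minimal one, so that the dichotomy detected by the integral is sharp. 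Once the boundary Harnack inequalities are in place, the matching upper and lower bounds on the crossing probabilities in terms of $\beta_x^E$ follow by iterating the box estimates across scales, and the equivalence with convergence of $\int_{|x|\ge1}\beta_x^E(x)/|x|\,dx$ is obtained.
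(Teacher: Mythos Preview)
The paper does not prove this proposition. It is quoted verbatim as a result of Benedicks (reference~\cite{Benedicks} in the paper, \emph{Ark.\ Mat.}\ 18 (1980), 53--71) and used as a black box inside the proof of Lemma~\ref{parabBl}. There is therefore nothing to compare your argument against: the author's ``proof'' consists of the single sentence ``Then, Benedicks showed the following.''

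That said, your sketch contains a substantive misunderstanding of what the proposition asserts, and this propagates into the structure of your argument. For a Denjoy domain $\Omega=\widehat{\mathbb C}\setminus E$ with $E\subset\mathbb R$ closed, Benedicks' dichotomy is between \emph{one} and \emph{two} minimal Martin boundary points over $\infty$; there is no four-way split coming from ``side of the real axis'' crossed with ``end of $\mathbb R$''. The two minimal functions, when they exist, are exactly the limits of the Martin kernel as $\zeta\to\infty$ through the upper and lower half-planes, and they coincide precisely when Brownian motion escaping to $\infty$ a.s.\ crosses $\mathbb R\setminus E$ infinitely often. Your ``end'' coordinate is spurious: within each half-plane the domain is connected near $\infty$, so no further splitting can occur. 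Consequently your target (``more than two points'') and your mechanism (four side/end combinations) do not match the actual statement, and the Borel--Cantelli/product step you describe would have to be reorganized around the correct $1$-versus-$2$ dichotomy.

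Beyond that, the ingredients you name---the probabilistic reading of $\beta_x^E(x)$ as an exit probability, dyadic summation comparable to $\int_{|x|\ge1}\beta_x^E(x)/|x|\,dx$, and a boundary Harnack principle to pass from crossing statistics to identification of minimal harmonic functions---are indeed the right circle of ideas and are close in spirit to how this theorem is proved (Benedicks' own argument is potential-theoretic rather than probabilistic, but the translation is standard). What is genuinely missing from your outline is any indication of how you would establish the boundary Harnack inequality in this generality; you flag it as ``the main obstacle'' but give no route to it, and without it the passage from the integral test to the structure of $\mathcal P_\infty$ remains a gap rather than a proof.
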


		Let $a\in T$ be the parabolic fixed point $B_1$.
	We take a M\"obius transformation $\gamma$ so that $\gamma (T)=\mathbb R\cup \{\infty\}$ and $\gamma (a)=\infty$.
For $\widehat{B}_1:=\gamma B_1\gamma^{-1}$, we see that $\infty$ is a parabolic fixed point with a unique attracting petal of $\widehat{B}_1$, and $\mathcal J_{1}:=\gamma (\mathcal J_{B_1})$ is contained in $\mathbb R\cup\{\infty\}$.
	
		Since $z=\infty$ is a parabolic fixed point of $\widehat{B}_1$ with only one attracting pegtal, we may assume that there exists a sufficiently large $M>0$ such that $\mathcal J_{1}\cap\{\textrm{Re }z<-M\}$ is empty while $\mathcal J_{1}\cap\{\textrm{Re }z>M\}$ is not empty.
		Hence, $\mathcal J_{1}\cap Q(x, \alpha |x|)=\emptyset$ if $x<0$ and $|x|$ is sufficiently large.
		Therefore, $\beta_{x}^{\mathcal J_1}(x)=1$ for such $x$.
		Thus, we have
		\begin{equation*}
			\int_{|x|\geq 1}\frac{\beta_{x}^{\mathcal J_1}(x)}{|x|}dx=\infty	
		\end{equation*}
		and conclude that there exists exactly one point over $\infty$ from Proposition \ref{Pro:Benedicks}.
\end{proof}
Lemma \ref{parabBl} implies that we cannot use the argument used for extended Schottky groups.
We exhibit the following conjecture at the end of this article.

{\it Conjecture.} $X_{\mathcal J_1}$ is not quasiconformally equivalent to $X_{\mathcal C}$.

\end{document}